\DeclareMathAlphabet\gothic{U}{euf}{m}{n}
\def\eqnarray{\stepcounter{equation}\let\@currentlabel=\theequation
\global\@eqnswtrue
\tabskip\@centering\let\\=\@eqncr
$$\halign to \displaywidth\bgroup\hfil\global\@eqcnt\z@
  $\displaystyle\tabskip\z@{##}$&\global\@eqcnt\@ne
  \hfil$\displaystyle{{}##{}}$\hfil
  &\global\@eqcnt\tw@ $\displaystyle{##}$\hfil
  \tabskip\@centering&\llap{##}\tabskip\z@\cr}
\def\endeqnarray{\@@eqncr\egroup
      \global\advance\c@equation\m@ne$$\global\@ignoretrue}
\def\@yeqncr{\@ifnextchar [{\@xeqncr}{\@xeqncr[5pt]}}
\newtheorem{lemm}{Lemma}[section]
\newtheorem{thrm}[lemm]{Theorem}
\newtheorem{eeg}[lemm]{Example}
\newtheorem{rrema}[lemm]{Remark}
\newtheorem{prop}[lemm]{Proposition}
\newtheorem{ddefi}[lemm]{Definition}
\newenvironment{defi}{\begin{ddefi} \rm}{\end{ddefi}}
\newcounter{teller}
\newenvironment{tabel}{\begin{list}%
{\rm  (\alph{teller})\hfill}{\usecounter{teller} \leftmargin=1.1cm
\labelwidth=1.1cm \labelsep=0cm \parsep=0cm}
                      }{\end{list}}
\newcounter{tellerr}
\newenvironment{tabeleq}{\begin{list}%
{\rm  (\roman{tellerr})\hfill}{\usecounter{tellerr} \leftmargin=1.1cm
\labelwidth=1.1cm \labelsep=0cm \parsep=0cm}
                         }{\end{list}}
\newcounter{tellerrr}
\newcounter{proofstep}
\newcommand{\Ni}{\mathds{N}}
\newcommand{\Ri}{\mathds{R}}
\newcommand{\Ci}{\mathds{C}}
\newcommand{\R}{\mathrm{Re} \,}
\newcommand{\I}{\mathrm{Im} \,}
\newcommand{\tr}{\mathrm{tr} \,}
\newcommand{\D}{\partial}
\newcommand{\dist}{\mathrm{dist} \,}
\newcommand{\one}{\mathds{1}}
\newlength{\hightcharacter}
\newlength{\widthcharacter}
\begin{document}

\thispagestyle{empty}

\vspace*{1cm}
\begin{center}
{\Large\bf On sectoriality of degenerate elliptic operators} \\[5mm]
\large Tan Duc Do \\[10mm]

\end{center}

\vspace{5mm}

\begin{center}
{\bf Abstract}
\end{center}

\begin{list}{}{\leftmargin=1.8cm \rightmargin=1.8cm \listparindent=10mm 
   \parsep=0pt}
\item
Let $c_{kl} \in W^{1,\infty}(\Omega, \Ci)$ for all $k,l \in \{1, \ldots, d\}$ and $\Omega \subset \Ri^d$ be open with Lipschitz boundary.
We consider the divergence form operator 
$
A_p = - \sum_{k,l=1}^d \D_l (c_{kl} \, \D_k)
$
in $L_p(\Omega)$ when the coefficient matrix satisfies $(C(x) \, \xi, \xi) \in \Sigma_\theta$ for all $x \in \Omega$ and $\xi \in \Ci^d$, where $\Sigma_\theta$ be the sector with vertex 0 and semi-angle $\theta$ in the complex plane.
We show that a sectorial estimate hold for $A_p$ for all $p$ in a suitable range.
We then apply these estimates to prove that the closure of $-A_p$ generates a holomorphic semigroup under further assumptions on the coefficients.
The contractivity and consistency properties of these holomorphic semigroups are also considered.
\end{list}

\vspace{2.5cm}
\noindent
November 2016

\vspace{5mm}
\noindent
AMS Subject Classification: 35J25, 35K65, 47B44.

\vspace{5mm}
\noindent
Keywords: Degenerate elliptic operator, sectorial operator, 
holomorphic semigroup, contraction semigroup, consistent semigroup.

\vspace{10mm}

\noindent
{\bf Home institution:}    \\[3mm]
Department of Mathematics \\
University of Auckland  \\ 
Private bag 92019 \\ 
Auckland 1142 \\
New Zealand  \\[5pt]
Email: tan.do@auckland.ac.nz

\newpage

\setcounter{page}{1}

\section{Introduction} \label{Intro}

In his book \cite{Kat1} Kato showed that an $m$-sectorial operator in a Hilbert space generates a (quasi-)contraction holomorphic semigroup.
One can generalise the notion of sectorial operators to $L_p$-spaces as follows (cf.\ \cite[Definition 1.5.8]{Gol}, \cite[Subsection V.3.10]{Kat1} and \cite[Definition 1]{CiaM}).

\begin{defi}
Let $d \in \Ni$, $\Omega \subset \Ri^d$ be open and $p \in (1, \infty)$.
Let $A_p$ be an operator in $L_p(\Omega)$.
Then $A_p$ is said to be \emph{sectorial} if there exists a $K > 0$ such that
\begin{equation} \label{sectorial estimate}
|\I (A_p u, |u|^{p-2} \, u \, \one_{[u \neq 0]})| \leq K \, \R (A_p u, |u|^{p-2} \, u \, \one_{[u \neq 0]})
\end{equation}
for all $u \in D(A_p)$.
\end{defi}

There are certain interests in showing that an operator is sectorial in this generalised sense.
The significance of these estimates lies in the fact that they are useful in showing that the operators under consideration satisfy a necessary condition to generate holomorphic contraction semigroups.
% as discussed in \cite[Theorem 1.3.9]{Paz} and \cite[Theorem 1.5.9]{Gol}.
In particular the estimate \eqref{sectorial estimate} can be established for certain second-order differential operators in divergence form.
In the proof of \cite[Theorem 7.3.6]{Paz}, Pazy showed that \eqref{sectorial estimate} holds when the operator is strongly elliptic with symmetric real-valued $C^1$-coefficients, with an explicit constant $K$ which depends on the coefficients, the elliticity constant and $p$.
Okazawa improved Pazy's result and showed that the estimate also holds for degenerate elliptic operators with symmetric real-valued $C^1$-coefficients, with $K = \frac{|p-2|}{2 \, \sqrt{p-1}}$ (cf.\ \cite{Oka2}).
Ouhabaz in \cite[Theorem 3.9]{Ouh5} proved that \eqref{sectorial estimate} is true for generators of sub-Markovian semigroups.
It is interesting to note that \cite[Theorem 3.9]{Ouh5} gives the same constant $K$ in \eqref{sectorial estimate} as in \cite{Oka2}.

In this paper we will prove the sectorial estimate \eqref{sectorial estimate} for degenerate elliptic second-order differential operators with bounded complex-valued coefficients.
The results are generalisations of \cite{Oka2}.
In comparison to \cite[Theorem 3.9]{Ouh5}, we note that the operators we consider here are in general no longer generators of sub-Markovian semigroups.
We will then apply the estimate to show that degenerate elliptic operators with smooth enough coefficients generate contraction holomorphic semigroups.

In order to formulate the main theorem, we need to introduce some notation.
Let $d \in \Ni$, $\Omega \subset \Ri^d$ be open with Lipschitz boundary and $\theta \in [0, \frac{\pi}{2})$. 
Let $c_{kl} \in W^{1,\infty}(\Omega, \Ci)$ for all $k, l \in \{1, \ldots, d\}$.
Define $C = (c_{kl})_{1 \leq k,l \leq d}$ and 
\begin{equation} \label{sector defi}
\Sigma_\theta = \{r \, e^{i \, \beta}: r \geq 0 \mbox{ and } |\beta| \leq \theta\}.
\end{equation}
Assume that 
\begin{equation} \label{values in sector}
(C(x) \, \xi, \xi) \in \Sigma_\theta
\end{equation}
for all $x \in \Omega$ and $\xi \in \Ci^d$.

Let $p \in (1, \infty)$.
Consider the operator $A_p$ in $L_p(\Omega)$ defined by
\[
A_p u = - \sum_{k,l=1}^d \D_l (c_{kl} \, \D_k u)
\]
on the domain
\[
D(A_p) = W^{2,p}(\Omega) \cap W^{1,p}_0(\Omega).
\]

If $p=2$ then
\begin{equation} \label{A2}
|\I (A_2 u, u)| \leq (\tan\theta) \, \R (A_2 u, u)
\end{equation}
for all $u \in D(A_2)$.
This follows immediately from integration by parts. 
If $p \neq 2$ the situation is quite different.
Write $C = R + i \, B$, where $R$ and $B$ are real matrices.
Let $R_a$ and $B_a$ be the anti-symmetric parts of $R$ and $B$ respectively, that is, $R_a = \frac{R - R^T}{2}$ and $B_a = \frac{B - B^T}{2}$.
%In what follows we use the convention that $\tan\frac{\pi}{2} = \infty$ and $\frac{1}{\infty} = 0$ to deal with the case when $p=2$.

The main result of this paper is as follows.

\begin{thrm} \label{main theorem}
Suppose $|1 - \frac{2}{p}| < \cos \theta$ and $B_a = 0$.
Then
\[
|\I (A_p u, |u|^{p-2} \, u \, \one_{[u \neq 0]})| \leq K \, \R (A_p u, |u|^{p-2} \, u \, \one_{[u \neq 0]})
\]
for all $u \in D(A_p)$, where 
\begin{equation} \label{K value}
K = \left\{
\begin{array}{ll}
\tan (\frac{\pi}{2} - \phi + \theta) & \mbox{if } R_a = 0, 
\\
\frac{\big( \frac{2}{\sin\phi} - 1 \big) \, \tan\theta + \cot\phi}{1 - (\tan \theta) \, \cot\phi} & \mbox{if } R_a \neq 0
\end{array}
\right.
\end{equation}
and $\phi = \arccos |1 - \frac{2}{p}|$.
\end{thrm}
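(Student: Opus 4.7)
The strategy is to integrate by parts, rewrite the integrand via the polar decomposition of $u$, and reduce the problem to a pointwise trigonometric optimisation.

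Integration by parts in $(A_p u, |u|^{p-2} u\, \one_{[u \neq 0]})$, legitimate because $D(A_p) \subset W_0^{1,p}(\Omega)$, combined with the chain rule for $\D_l(|u|^{p-2} \bar u)$ gives
\[
(A_p u, |u|^{p-2}u\, \one_{[u\neq 0]}) = \int_{[u\neq 0]} |u|^{p-2}\Bigl\{\tfrac{p}{2}(C\xi,\xi) + \tfrac{p-2}{2}\omega^2 (C\xi,\bar\xi)\Bigr\},
\]
with $\xi = \nabla u$ and $\omega = \bar u/|u|$. Writing $u = |u| e^{i\psi}$ on $\{u \neq 0\}$ and setting $\tau := \nabla|u|$, $\eta := |u|\nabla\psi$, so that $\nabla u = e^{i\psi}(\tau + i\eta)$, a direct algebraic manipulation (using $C = R_s + R_a + iB_s$ since $B_a = 0$) rewrites the integrand as $|u|^{p-2}(\mathcal{R} + i\mathcal{I})$ with
\begin{align*}
\mathcal{R} &= (p-1)(R_s\tau,\tau) + (R_s\eta,\eta) - (p-2)(B_s\eta,\tau), \\
\mathcal{I} &= (p-1)(B_s\tau,\tau) + (B_s\eta,\eta) + (p-2)(R_s\eta,\tau) + p(R_a\eta,\tau),
\end{align*}
where the pairings are the natural real bilinear brackets. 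The theorem thus reduces to the pointwise estimate $|\mathcal{I}| \leq K \mathcal{R}$ for arbitrary $\tau,\eta \in \Ri^d$ subject to the sector constraints.

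To extract those constraints I would test the sector hypothesis against real $\xi$, which gives $|(B_s\tau,\tau)| \leq (\tan\theta)(R_s\tau,\tau)$ and its analogue for $\eta$, and then by polarisation $|(R_s\eta,\tau)| \leq \sqrt{(R_s\tau,\tau)(R_s\eta,\eta)}$ and $|(B_s\eta,\tau)| \leq (\tan\theta)\sqrt{(R_s\tau,\tau)(R_s\eta,\eta)}$. Applying it further to the complex vector $\xi = \tau + it\eta$ for $t\in\Ri$ and requiring the resulting real quadratic in $t$ to satisfy the sector inequality for every $t$ forces a discriminant bound that controls $|(R_a\eta,\tau)|$ in terms of the symmetric quantities above.

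Setting $s := \sqrt{(R_s\tau,\tau)(R_s\eta,\eta)}$ and $T := (p-1)(R_s\tau,\tau) + (R_s\eta,\eta)$, the AM--GM inequality gives $|p-2|\,s/T \leq |p-2|/(2\sqrt{p-1}) = \cot\phi$. In the case $R_a = 0$, substituting the bounds into $|\mathcal{I}|/\mathcal{R}$ produces a quotient of the shape $(\tan\theta + \mu)/(1 - \mu\tan\theta)$ with $\mu \in [0,\cot\phi]$; the hypothesis $|1-2/p|<\cos\theta$ (equivalently $\theta<\phi$) keeps the denominator positive, and the tangent addition formula identifies the extremal value as $\tan(\tfrac{\pi}{2} - \phi + \theta)$. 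For $R_a \neq 0$ the extra term $p(R_a\eta,\tau)$ is absorbed using the discriminant bound from the previous step, producing a quotient with the same denominator $1 - (\tan\theta)\cot\phi$ but the modified numerator $(2/\sin\phi - 1)\tan\theta + \cot\phi$, which is the second branch of $K$; integrating the pointwise inequality over $\Omega$ then finishes the proof. The main obstacle will be this final optimisation: the extremisers of the AM--GM step do not automatically extremise the bound on $(R_a\eta,\tau)$, so one must run the optimisation in all variables simultaneously and verify that both branches of the discriminant inequality are compatible with the chosen substitution.
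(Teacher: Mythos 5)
Your overall strategy — integration by parts to reduce the assertion to a pointwise inequality in the real and imaginary parts of $u\,\nabla\overline{u}$, extraction of the constraints from the sector condition (tested against real and complex vectors), and a tangent-addition optimisation — is essentially the paper's route. Your polar decomposition $u=|u|e^{i\psi}$ with $\tau=\nabla|u|$, $\eta=|u|\nabla\psi$ is a reparametrisation of the paper's $u\,\nabla\overline{u}=\xi+i\eta$ (indeed $u\,\nabla\overline{u}=|u|(\tau-i\eta)$), and your $\mathcal{R}$, $\mathcal{I}$ agree with the paper's Proposition~\ref{Re and Im} after rescaling $\xi'=\sqrt{p-1}\,\xi$. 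Your treatment of the $R_a=0$ case (denominator $1-(\tan\theta)\cot\phi$ from the $B_s$-cross term in $\mathcal{R}$, numerator $\tan\theta+\cot\phi$) is precisely the content of the paper's Lemma~\ref{ps}.

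The gap is in the $R_a\neq 0$ case, and it is exactly the obstacle you flag at the end without resolving. If one uses the individual sector-derived bounds — $|(B_s\xi',\xi')+(B_s\eta,\eta)|\le\tan\theta\big((R_s\xi',\xi')+(R_s\eta,\eta)\big)$ and, separately, $|(R_a\xi',\eta)|\le\tan\theta\big((R_s\xi',\xi')+(R_s\eta,\eta)\big)$ — then the coefficient of $\tan\theta$ in the numerator comes out as $1+\tfrac{p}{\sqrt{p-1}}=1+\tfrac{2}{\sin\phi}$, not the claimed $\tfrac{2}{\sin\phi}-1$. The paper obtains the sharper constant by splitting
\[
\tfrac{p}{\sqrt{p-1}}\,(R_a\xi',\eta)=2\,(R_a\xi',\eta)+\big(\tfrac{p}{\sqrt{p-1}}-2\big)\,(R_a\xi',\eta)
\]
and grouping the first piece with the $B_s$-diagonal terms so as to estimate
\[
\big|(B_s\xi',\xi')+(B_s\eta,\eta)-2(R_a\xi',\eta)\big|\le\tan\theta\,\big((R_s\xi',\xi')+(R_s\eta,\eta)\big),
\]
which is the sector condition applied directly to the complex vector $\xi'+i\eta$ (the paper's Lemma~\ref{sector equiv form}); only the leftover $\big(\tfrac{p}{\sqrt{p-1}}-2\big)$-piece is then handled by the separate bound on $|(R_a\xi',\eta)|$ (the paper's Lemma~\ref{Ra < Rs}). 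You have the raw ingredient — the quadratic-in-$t$ test $\xi=\tau+it\eta$ yields both the combined bound (at $t=1$) and the individual one (via the discriminant) — but without the specific $2+(\tfrac{p}{\sqrt{p-1}}-2)$ grouping the asserted numerator does not follow, and this is the step that has to be made explicit before the two branches of your inequality ``are compatible.''
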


Note that when the coefficient matrix $C$ consists of real entries and is symmetric, then one can choose $\theta = 0$ and \eqref{K value} gives 
\[
K = \tan (\frac{\pi}{2} - \phi) = \cot\phi = \frac{|p-2|}{2 \, \sqrt{p-1}},
\]
which is the constant obtained by Okazawa in \cite{Oka2}.

It is not difficult to see that $A_p$ is closable.
Let $\overline{A_p}$ be the closure of $A_p$.
Under the current conditions imposed on the coefficient matrix $C$ and the domain $\Omega$, we do not know whether $-\overline{A_p}$ is a generator of a $C_0$-semigroup.
If $\Omega = \Ri^d$ and $C$ consists of twice differentiable entries, then we prove the following generation result for $-\overline{A_p}$ based on Theorem \ref{main theorem}.

\begin{thrm} \label{holomorphic semigroup}
Let $\Omega = \Ri^d$.
Suppose $|1 - \frac{2}{p}| < \cos \theta$ and $B_a = 0$.
Suppose further that $c_{kl} \in W^{2,\infty}(\Ri^d)$ for all $k,l \in \{1, \ldots, d\}$.
Set $\phi = \arccos |1 - \frac{2}{p}|$.
Then the closure $-\overline{A_p}$ generates a holomorphic semigroup on $L_p(\Ri^d)$ with angle $\psi$ given by
\begin{equation} \label{psi}
\psi = \left\{
\begin{array}{ll}
\phi - \theta, & \mbox{if } R_a = 0,
\\
\frac{\pi}{2} - \arctan \Big( \frac{\big( \frac{2}{\sin\phi} - 1 \big) \, \tan\theta + \cot\phi}{1 - (\tan \theta) \, \cot\phi} \Big),
	& \mbox{if } R_a \neq 0.
\end{array}
\right.
\end{equation}
\end{thrm}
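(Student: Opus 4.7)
The plan is to combine the sectorial estimate of Theorem \ref{main theorem} with an approximation by non-degenerate operators, and then to invoke a classical generation theorem: if $B$ is a densely defined operator in $L_p$ whose numerical range $\{\langle Bv, |v|^{p-2} v \one_{[v\neq 0]}\rangle : v \in D(B)\}$ lies in a closed sector $\Sigma_\omega$ with $\omega < \pi/2$, and if $\lambda I + B$ has dense range for some $\lambda > 0$, then $-\overline{B}$ generates a holomorphic $C_0$-semigroup on $L_p$ of (half-)angle $\pi/2 - \omega$. Taking $\omega = \arctan K$ with $K$ as in \eqref{K value}, one verifies that $\pi/2 - \omega$ coincides with $\psi$ in \eqref{psi}.

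The numerical range condition transfers to $\overline{A_p}$ automatically. The Nemytskii-type map $v \mapsto |v|^{p-2} v \one_{[v \neq 0]}$ is continuous from $L_p(\Ri^d)$ into $L_q(\Ri^d)$ with $q = p/(p-1)$, so if $u_n \in D(A_p)$ satisfies $u_n \to u$ and $A_p u_n \to \overline{A_p} u$ in $L_p$, both sides of the inequality in Theorem \ref{main theorem} pass to the limit. Consequently the numerical range of $\overline{A_p}$ lies in $\Sigma_{\arctan K}$.

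To establish the range condition I would use the approximation $C_\varepsilon := C + \varepsilon I$ for $\varepsilon > 0$. Since $\Sigma_\theta$ is closed under addition of nonnegative reals, $C_\varepsilon$ still takes values in $\Sigma_\theta$; and because the perturbation only alters the real diagonal part, $B_a = 0$ is preserved. Moreover $C_\varepsilon$ is strongly elliptic, and since $c_{kl} \in W^{2,\infty}(\Ri^d)$, writing the associated operator $A_p^{(\varepsilon)}$ in non-divergence form and applying classical $L_p$-elliptic theory on the whole space (Calder\'on--Zygmund estimates) gives that $\lambda I + A_p^{(\varepsilon)}: W^{2,p}(\Ri^d) \to L_p(\Ri^d)$ is bijective for every sufficiently large $\lambda > 0$. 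Given $f \in L_p$, let $u_\varepsilon$ solve $(\lambda + A_p^{(\varepsilon)}) u_\varepsilon = f$. Applying Theorem \ref{main theorem} to $A_p^{(\varepsilon)}$ with the \emph{same} constant $K$, one obtains $\lambda \|u_\varepsilon\|_p^p \leq \R\langle f, |u_\varepsilon|^{p-2} u_\varepsilon \one_{[u_\varepsilon \neq 0]}\rangle \leq \|f\|_p \|u_\varepsilon\|_p^{p-1}$, hence the uniform bound $\lambda \|u_\varepsilon\|_p \le \|f\|_p$. A weak-$*$ subsequence limit $u \in L_p$ then satisfies $(\lambda + A_p) u = f$ in $\mathcal{D}'(\Ri^d)$ because $\varepsilon \Delta u_\varepsilon \to 0$ distributionally, from which one aims to conclude $u \in D(\overline{A_p})$ with $(\lambda + \overline{A_p}) u = f$.

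The main obstacle is precisely this last identification. Weak-$*$ convergence of $u_\varepsilon$ yields the equation only distributionally; to realise $u$ as a graph-norm limit of elements of $D(A_p)$ one must upgrade to an appropriate compactness statement. This is where the $W^{2,\infty}$-regularity of the coefficients and the absence of boundary ($\Omega = \Ri^d$) enter decisively, allowing one to differentiate the approximate equation, obtain uniform $L_p$-control on the first derivatives of $u_\varepsilon$, and produce via a standard cut-off plus mollification argument an approximating sequence in $C_c^\infty(\Ri^d) \subset D(A_p)$. Once closedness of $\overline{A_p}$, numerical range in $\Sigma_{\arctan K}$ and surjectivity of $\lambda + \overline{A_p}$ are in place, the generation theorem quoted above yields $-\overline{A_p}$ as the generator of a holomorphic $C_0$-semigroup of angle $\psi$.
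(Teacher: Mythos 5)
Your overall strategy matches the paper's in the first half: both arguments use Theorem \ref{main theorem} to place the numerical range of $\overline{A_p}$ in $\Sigma_{\arctan K}$, observe that this estimate passes to the closure by continuity of the duality map $v \mapsto |v|^{p-2}v\,\one_{[v\neq 0]}$ from $L_p$ to $L_q$, and then appeal to a standard generation theorem (the paper uses \cite[Theorems 1.3.9 and 2.5.2(c)]{Paz}) that converts a sectorial numerical range plus a range condition into holomorphy of angle $\frac{\pi}{2}-\arctan K$. Your verification that $\frac{\pi}{2}-\arctan K$ reduces to the two expressions in \eqref{psi} is also correct.

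Where you diverge is in the range condition, and this is where there is a genuine gap. The paper does not run a vanishing-viscosity argument at all: it invokes Proposition \ref{Ap m accretive}, which in turn cites \cite[Proposition 4.9]{Do1} for the statement that the maximal realization $B_p = (H_q)^*$ is $m$-accretive with $C_c^\infty(\Ri^d)$ as a core, giving $\overline{A_p} = B_p$ and surjectivity of $\lambda + \overline{A_p}$ in one stroke. Your replacement for this step --- approximating $C$ by $C+\varepsilon I$, solving $(\lambda + A_p^{(\varepsilon)})u_\varepsilon = f$ via Calder\'on--Zygmund and the method of continuity, and extracting a weak-$*$ limit --- produces a solution $u$ of $(\lambda + A_p)u = f$ only in $\mathcal{D}'(\Ri^d)$. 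You explicitly flag the gap yourself: you write that one ``aims to conclude $u \in D(\overline{A_p})$'' and that ``the main obstacle is precisely this last identification,'' and the subsequent sentences (``allowing one to differentiate the approximate equation $\dots$ produce $\dots$ an approximating sequence'') describe an intention rather than an argument. The difficulty is real: a distributional solution in $L_p$ need not lie in the domain of the closure, and upgrading requires precisely the first-order gradient estimate and density of test functions that \cite[Propositions 4.7 and 4.8]{Do1} supply --- the analogues of Propositions \ref{W Cc dense 1} and \ref{W 2nd ineq} in the $\alpha = 0$ case --- and these are nontrivial consequences of the $W^{2,\infty}$ hypothesis, not a ``standard cut-off plus mollification.'' As written, the surjectivity of $\lambda + \overline{A_p}$ is therefore not established, and without it the generation theorem cannot be applied.

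A secondary but fixable point: the Calder\'on--Zygmund constant for $A_p^{(\varepsilon)}$ blows up as $\varepsilon \to 0$, so the threshold $\lambda$ for which the method of continuity applies depends on $\varepsilon$. To get a uniform $\lambda$ one must first prove $m$-accretivity of each $A_p^{(\varepsilon)}$ (range for one $\lambda_\varepsilon$ plus the numerical range estimate gives range for all $\lambda>0$), and only then take $\varepsilon\to 0$ at fixed $\lambda$. You implicitly rely on this but do not say it. This is minor compared with the identification gap above, which is the essential missing piece.
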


Note that 
\[
\psi_1 := \frac{\pi}{2} - \arctan \Big( \frac{\big( \frac{2}{\sin\phi} - 1 \big) \, \tan\theta + \cot\phi}{1 - (\tan \theta) \, \cot\phi} \Big)
\leq 
\phi - \theta
\]
since
\begin{equation} \label{angle comparison}
\tan \psi_1 
= \frac{1 - (\tan\theta) \, \cot\phi}{\big( \frac{2}{\sin\phi} - 1 \big) \, \tan\theta + \cot\phi}
\leq \frac{1 - (\tan \theta) \, \cot\phi}{\tan\theta + \cot\phi}
= \tan(\phi - \theta).
\end{equation}

%With the convention that $\tan\frac{\pi}{2} = \infty$ and $\frac{1}{\infty} = 0$, the equality in \eqref{angle comparison} happens when $\phi = 0$, or equivalently when $p=2$.

It is also interesting that in the case when $R_a=0$, Theorem \ref{holomorphic semigroup} provides better angles of holomorphy compared with those of Stein's interpolations \cite[Proposition 3.12]{Ouh5} and \cite[Theorem 1]{Ste2}.
In the one-dimensional case, these better angles were also obtained in \cite[Corollary 1.3]{DE1}.
Other results about angles of holomorphy were considered in \cite[Theorem 1]{Wei}, \cite[Theorem 1.1]{Epp}, \cite[Theorem 1.4.2]{Dav2}, \cite[Theorem X.55]{RS2}, \cite{LiP1} and \cite[Theorems 3.12 and 3.13]{Ouh5}.

The holomorphic semigroup generated by $-\overline{A_p}$ in Theorem \ref{holomorphic semigroup} also possesses nice contractivity and consistency properties.

\begin{thrm} \label{contraction holomorphic semigroup}
Adopt the assumptions and notation as in Theorem \ref{holomorphic semigroup}.
Let $S^{(p)}$ be the semigroup generated by $-\overline{A_p}$ and $S$ the semigroup generated by $-\overline{A_2}$.
Then the following hold.
\begin{tabeleq}
\item \label{contraction 1} $S^{(p)}$ is contractive on $\Sigma_\gamma$, where
\begin{equation} \label{gamma}
\gamma 
= \left\{ 
\begin{array}{ll}
\psi & \mbox{if } R_a = 0,
\\
\psi \wedge \sup \, \big\{ \beta \in [0, \frac{\pi}{2}): (\tan\theta) \, \tan\beta < \frac{1}{3} \big\} & \mbox{if } R_a \neq 0.
\end{array}
\right.
\end{equation}
\item \label{consistence 1} $S^{(p)}$ is consistent with $S$ on $\Sigma_\psi$.
\end{tabeleq}
\end{thrm}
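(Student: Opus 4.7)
For part~\ref{contraction 1} the plan is to apply the Ouhabaz-type criterion for contractivity on a sector: $S^{(p)}$ is contractive on $\Sigma_\gamma$ precisely when
\[
\R \bigl( e^{i \alpha} \overline{A_p} w, \, |w|^{p-2} w \, \one_{[w \neq 0]} \bigr) \geq 0
\]
for every $w$ in a core of $\overline{A_p}$ and every $\alpha$ with $|\alpha| < \gamma$; the standard reduction uses $\frac{d}{dr} \|S^{(p)}(re^{i\alpha}) u\|_p^p \leq 0$. Since $D(A_p)$ is a core for $\overline{A_p}$ by construction, it suffices to verify the inequality for $w \in D(A_p)$ and then extend to $D(\overline{A_p})$ by a routine closure argument (combining strong convergence of $A_p w_n \to \overline{A_p} w$ in $L_p$ with pointwise a.e.\ convergence of $|w_n|^{p-2} w_n \to |w|^{p-2} w$ dominated in $L_{p'}$).

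When $R_a = 0$ the plan is to rotate and apply Theorem~\ref{main theorem} once more. The rotated coefficient matrix $e^{i\alpha} C$ satisfies $(e^{i\alpha} C(x) \xi, \xi) \in \Sigma_{\theta + |\alpha|}$, and its anti-symmetric imaginary part is $\sin\alpha \, R_a = 0$. For $|\alpha| < \psi = \phi - \theta$ the inequality $|1 - 2/p| = \cos\phi < \cos(\theta + |\alpha|)$ remains valid, so Theorem~\ref{main theorem} applies to the degenerate elliptic operator $e^{i\alpha} A_p$ and yields $\R(e^{i\alpha} A_p u, |u|^{p-2} u \, \one_{[u \neq 0]}) \geq 0$, exactly the inequality required. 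This gives $\gamma = \psi$.

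When $R_a \neq 0$, rotation destroys the hypothesis $B_a = 0$ because $B'_a = \sin\alpha \, R_a$, so Theorem~\ref{main theorem} is no longer available for the rotated operator. I would instead revisit the $L_p$ form estimate underlying Theorem~\ref{main theorem} directly for $e^{i\alpha} A_p$, tracking the additional terms produced by $B'_a$. The factor $\frac{1}{3}$ in the condition $(\tan\theta) \tan\beta < \frac{1}{3}$ is the natural tolerance coming from a Young-type inequality used to absorb three cross-terms (those coupling $R_a$ to $R_s$, $B_s$ and to itself) into the positive leading symmetric form in $\nabla |u|$ and $|u| \nabla (\arg u)$.

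For part~\ref{consistence 1} the plan is the standard three-step argument. First, I would establish resolvent consistency on $L_p(\Ri^d) \cap L_2(\Ri^d)$: for $\lambda > 0$ sufficiently large, both $u_p := (\lambda + \overline{A_p})^{-1} f$ and $u_2 := (\lambda + \overline{A_2})^{-1} f$ are weak solutions of the same equation $\lambda u - \sum_{k,l} \D_l(c_{kl}\, \D_k u) = f$, and uniqueness (using the elliptic regularity already invoked in Theorem~\ref{holomorphic semigroup}) forces $u_p = u_2$. Second, the Post--Widder formula
\[
S^{(p)}(t) f = \lim_{n \to \infty} \Bigl( \frac{n}{t} \Bigr)^n \Bigl( \frac{n}{t} + \overline{A_p} \Bigr)^{-n} f
\]
propagates resolvent consistency to consistency of the semigroups on $(0, \infty)$. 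Third, since both $z \mapsto S^{(p)}(z) f$ and $z \mapsto S(z) f$ are holomorphic on the connected sector $\Sigma_\psi$ and agree on the positive real axis, the identity theorem extends the equality to all of $\Sigma_\psi$. The main obstacle I anticipate is the direct computation in part~\ref{contraction 1} when $R_a \neq 0$: carefully integrating by parts with the complex derivatives of $|u|^{p-2} u$ in the presence of the rotated anti-symmetric imaginary part, and extracting precisely the tolerance $\frac{1}{3}$ from the resulting quadratic form inequalities, is the technically delicate step.
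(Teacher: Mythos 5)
Your plan diverges from the paper's route in a way that matters for where the constant $\frac{1}{3}$ comes from, and the divergence exposes a misconception that needs to be flagged.

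For part~\ref{contraction 1}, the paper does not use the ``accretivity on a core plus existing holomorphy implies contractivity'' argument you sketch. Instead it proves that the rotated operator $B_{p,\alpha}=e^{i\alpha}B_p$ is $m$-accretive for $|\alpha|<\gamma$ and then invokes Kato's Theorem~IX.1.23. Establishing $m$-accretivity (not mere accretivity) for a degenerate operator requires, in the Wong-Dzung/Do scheme, \emph{two} estimates: the first inequality $\R (B_{p,\alpha} u, |u|^{p-2}u\,\one_{[u\neq 0]})\geq 0$ (the paper's Proposition~\ref{W 1st ineq}), and an a priori \emph{gradient} estimate $\R(\nabla (B_{p,\alpha}u),|\nabla u|^{p-2}\nabla u\,\one_{[\nabla u\neq 0]})\geq -M\|\nabla u\|_p^p$ (Proposition~\ref{W 2nd ineq}), which is what makes the range condition go through. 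The crucial point you have reversed: the first (accretivity) estimate is valid for the \emph{full} range $\alpha\in(-\psi,\psi)$ even when $R_a\neq 0$ --- the paper's Proposition~\ref{W 1st ineq} is stated and proved for $\alpha\in(-\psi,\psi)$ with no $\frac{1}{3}$ in sight. The $\frac{1}{3}$ arises exclusively in the gradient estimate, from a coefficient-balancing step that produces the condition $(1-\varepsilon')\cos\alpha - (3-\varepsilon')\sin|\alpha|\tan\theta\geq 0$. Your route sidesteps the gradient estimate entirely, so if you carried through the ``rotate and redo the form estimate'' computation for $R_a\neq 0$ you would obtain accretivity for all $|\alpha|<\psi$ and never encounter a $\frac{1}{3}$ tolerance. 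You therefore cannot recover the theorem's stated $\gamma$ by the mechanism you propose; your attribution of $\frac{1}{3}$ to a Young-type absorption of $R_a$ cross-terms in the accretivity form is not what the paper does. (Whether your shortcut in fact proves a stronger contractivity region is a separate question worth examining carefully before claiming it.) You would also need the ingredient your plan implicitly assumes but never supplies: that $\overline{A_p}$ equals the adjoint operator $B_p$ so that the computation on $W^{2,p}$ really controls the full domain; in the paper this is Proposition~\ref{Ap m accretive}, itself an $m$-accretivity statement.

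For part~\ref{consistence 1}, your three-step plan (resolvent consistency, Post--Widder, identity theorem) is reasonable in outline and differs from the paper, which simply cites a prior consistency result. However, the pivotal first step --- ``uniqueness using the elliptic regularity already invoked in Theorem~\ref{holomorphic semigroup}'' --- is on shaky ground: the operator is degenerate, there is no elliptic regularity theory being invoked in Theorem~\ref{holomorphic semigroup} (that proof uses only the numerical-range and resolvent bounds from Pazy), and uniqueness of weak solutions to $\lambda u - \sum\partial_l(c_{kl}\partial_k u)=f$ across two different $L_p$-settings is exactly the nontrivial content that the cited result encapsulates. Without a concrete substitute for this uniqueness step your consistency argument has a genuine gap.
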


The outline of subsequent sections are as follows.
In Section 2 we provide some estimates on the coefficient matrix $C$.
These estimates are used to prove Theorem \ref{main theorem} in Section 3.
Theorems \ref{holomorphic semigroup} and \ref{contraction holomorphic semigroup} are proved in Section \ref{Rd domain}.

\section{Estimates on coefficients}

Let $\Omega$, $\theta$ and $C$ be as in Section \ref{Intro}.
In this section we provide some preliminary estimates on the coefficient matrix $C$ for later use.

Define 
\[
\R C = \frac{C + C^*}{2}
\quad
{\rm and}
\quad
\I C = \frac{C - C^*}{2i},
\]
where $C^*$ is the conjugate transpose of $C$.
Then $(\R C)(x)$ and $(\I C)(x)$ are self-adjoint for all $x \in \Omega$ and 
\begin{equation} \label{sa form}
C = \R C + i \, \I C.
\end{equation}
We will also decompose the coefficient matrix $C$ into
\begin{equation} \label{re form}
C = R + i \, B,
\end{equation}
where $R$ and $B$ are matrices with real entries.
Write $R = R_s + R_a$, where $R_s = \frac{R + R^T}{2}$ is the symmetric part of $R$ and $R_a = \frac{R - R^T}{2}$ is the anti-symmetric part of $R$.
Similarly $B = B_s + B_a$, where $B_s = \frac{B + B^T}{2}$ and $B_a = \frac{B - B^T}{2}$.
It follows from \eqref{sa form} and \eqref{re form} that
\[
\R C = R_s + i \, B_a
\quad
{\rm and}
\quad
\I C = B_s - i \, R_a.
\]

\begin{lemm} \label{Rs}
We have
\[
|(R_s \, \xi, \eta)| 
\leq \frac{1}{2} \, \Big( (R_s \, \xi, \xi) + (R_s \, \eta, \eta) \Big)
\]
for all $\xi, \eta \in \Ri^d$.
\end{lemm}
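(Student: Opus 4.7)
The plan is to show first that $R_s(x)$ is a positive semi-definite real symmetric matrix at each point $x \in \Omega$, and then apply a Cauchy--Schwarz argument followed by the AM--GM inequality. The fact that $R_s$ is symmetric is immediate from its definition, so the substantive point is positivity.

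To establish positive semi-definiteness, I would start from the standing assumption \eqref{values in sector}: $(C(x)\xi,\xi) \in \Sigma_\theta$ for all $\xi \in \Ci^d$. Since $\theta \in [0,\frac{\pi}{2})$, the sector $\Sigma_\theta$ lies in the closed right half-plane, so $\R(C(x)\xi,\xi) \geq 0$ for every $\xi \in \Ci^d$. Because $\R C$ is the self-adjoint part of $C$, one has $(\R C(x)\xi,\xi) = \R(C(x)\xi,\xi) \geq 0$ for all $\xi \in \Ci^d$. Specialising to real vectors $\xi \in \Ri^d$ and using the identity $\R C = R_s + i\,B_a$ established in Section 2, together with the fact that $(B_a\xi,\xi) = 0$ for any real anti-symmetric $B_a$ and real $\xi$, I would conclude that $(R_s(x)\xi,\xi) \geq 0$ for all $\xi \in \Ri^d$.

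Once positivity is in hand, the rest is routine. I would introduce the symmetric positive semi-definite square root $R_s^{1/2}$ and write
\[
(R_s \xi, \eta) = (R_s^{1/2}\xi, R_s^{1/2}\eta).
\]
Cauchy--Schwarz in $\Ri^d$ yields $|(R_s\xi,\eta)| \leq \|R_s^{1/2}\xi\|\,\|R_s^{1/2}\eta\|$, and the AM--GM inequality $ab \leq \frac{1}{2}(a^2+b^2)$ applied with $a = \|R_s^{1/2}\xi\|$ and $b = \|R_s^{1/2}\eta\|$ gives
\[
\|R_s^{1/2}\xi\|\,\|R_s^{1/2}\eta\| \leq \tfrac{1}{2}\bigl((R_s\xi,\xi) + (R_s\eta,\eta)\bigr),
\]
which is the desired inequality.

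I do not expect a real obstacle here; the only step that requires any thought is the translation of the complex sector condition \eqref{values in sector} into the real positivity of $R_s$, and even that reduces to the observation that anti-symmetric real matrices vanish on the diagonal quadratic form. The Cauchy--Schwarz/AM--GM step is standard and works uniformly in $x$, so the estimate holds pointwise on $\Omega$.
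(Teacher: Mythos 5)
Your proof is correct and follows essentially the same route as the paper: first derive $(R_s\,\xi,\xi)\geq 0$ for all real $\xi$ from the sector condition \eqref{values in sector}, then pass from the diagonal to the off-diagonal estimate. The only cosmetic difference is in the last step, where the paper invokes polarisation (expanding $(R_s(\xi\pm\eta),\xi\pm\eta)\geq 0$) while you go through the square root $R_s^{1/2}$ together with Cauchy--Schwarz and AM--GM; these are interchangeable standard arguments once positive semi-definiteness is established.
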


\begin{proof}
By hypothesis $C$ takes values in $\Sigma_\theta$.
This implies $((\R C) \, \xi, \xi) \geq 0$ for all $\xi \in \Ci^d$.
We deduce that $(R_s \, \xi, \xi) \geq 0$ for all $\xi \in \Ri^d$.
Finally we use polarisation to obtain the lemma.
\end{proof}

\begin{lemm} \label{W Bs < Rs}
We have
\[
|(B_s \, \xi, \eta)| 
\leq \frac{1}{2} \, (\tan \theta) \, \Big( (R_s \, \xi, \xi) + (R_s \, \eta, \eta) \Big)
\]
for all $\xi, \eta \in \Ri^d$.
\end{lemm}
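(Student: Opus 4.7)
The plan is to reduce the inequality, via polarisation, to the diagonal sector bound that already follows directly from the hypothesis \eqref{values in sector}.

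First I would specialise the sector condition to real vectors. For any $\zeta \in \Ri^d$, since $R_a$ and $B_a$ are anti-symmetric we have $(R_a\zeta,\zeta) = (B_a\zeta,\zeta) = 0$, so $(C\zeta,\zeta) = (R_s\zeta,\zeta) + i\,(B_s\zeta,\zeta)$. Being an element of $\Sigma_\theta$ forces $(R_s\zeta,\zeta)\ge 0$ together with the diagonal estimate
\[
|(B_s\zeta,\zeta)| \le (\tan\theta)\,(R_s\zeta,\zeta) \qquad (\zeta\in\Ri^d).
\]

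Next I would apply this diagonal estimate to $\zeta=\xi+\eta$ and $\zeta=\xi-\eta$, and use the polarisation identities for the real symmetric matrices $B_s$ and $R_s$, namely
\[
(B_s(\xi+\eta),\xi+\eta) - (B_s(\xi-\eta),\xi-\eta) = 4\,(B_s\xi,\eta)
\]
and
\[
(R_s(\xi+\eta),\xi+\eta) + (R_s(\xi-\eta),\xi-\eta) = 2\big[(R_s\xi,\xi) + (R_s\eta,\eta)\big].
\]
The triangle inequality then gives
\[
4\,|(B_s\xi,\eta)| \le (\tan\theta)\,\Big[(R_s(\xi+\eta),\xi+\eta) + (R_s(\xi-\eta),\xi-\eta)\Big] = 2(\tan\theta)\big[(R_s\xi,\xi)+(R_s\eta,\eta)\big],
\]
which is exactly the claim after dividing by $4$.

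There is no real obstacle here; the only subtle point is that $(B_s\zeta,\zeta)$ is signed, so one cannot directly apply the standard polarisation argument used for Lemma \ref{Rs} (which used positivity of $R_s$). Working instead with the two choices $\xi\pm\eta$ and exploiting the triangle inequality bypasses this, using only the non-negativity of the quadratic form associated with $R_s$ to absorb the cross terms on the right-hand side.
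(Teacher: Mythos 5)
Your proof is correct and follows essentially the same route as the paper: both start from the diagonal estimate $|(B_s\zeta,\zeta)|\le(\tan\theta)(R_s\zeta,\zeta)$ for real $\zeta$ (a specialisation of the sector condition) and then polarise. The only cosmetic difference is that the paper first states the stronger product bound $|(B_s\xi,\eta)|\le(\tan\theta)(R_s\xi,\xi)^{1/2}(R_s\eta,\eta)^{1/2}$ and then applies AM--GM, whereas you apply the diagonal bound at $\xi\pm\eta$ and the parallelogram identity for $R_s$ directly, which is a slightly more self-contained way to land on the same sum bound.
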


\begin{proof}
Since $C$ takes values in $\Sigma_\theta$, we have
\begin{equation} \label{sector}
\big| \big( (\I C) \, \xi, \xi \big) \big| \leq (\tan \theta) \, \big( (\R C) \, \xi, \xi \big)
\end{equation}
for all $\xi \in \Ci^d$.
It follows that
\[
|(B_s \, \xi, \xi)| \leq (\tan \theta) \, (R_s \, \xi, \xi)
\]
for all $\xi \in \Ri^d$.
Finally we use polarisation to obtain
\[
|(B_s \, \xi, \eta)| 
\leq (\tan \theta) \, (R_s \, \xi, \xi)^{1/2} \, (R_s \, \eta, \eta)^{1/2}
\leq \frac{1}{2} \, (\tan \theta) \, \Big( (R_s \, \xi, \xi) + (R_s \, \eta, \eta) \Big)
\]
for all $\xi, \eta \in \Ri^d$ as required.
\end{proof}

\begin{lemm} \label{sector equiv form}
We have
\[
\big| (B_s \, \xi, \xi) + (B_s \, \eta, \eta) - 2 \, (R_a \, \xi, \eta) \big|
\leq 
(\tan\theta) \, \Big( (R_s \, \xi, \xi) + (R_s \, \eta, \eta) + 2 \, (B_a \, \xi, \eta) \Big)
\]
for all $\xi, \eta \in \Ri^d$.
\end{lemm}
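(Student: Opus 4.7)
The plan is to read the inequality as nothing more than the sector condition \eqref{sector} applied to a complexified test vector. Recall from the discussion preceding the lemma that $\R C = R_s + i\,B_a$ and $\I C = B_s - i\,R_a$; moreover, as noted in the proof of Lemma \ref{W Bs < Rs}, the hypothesis \eqref{values in sector} is equivalent to
\[
\big|((\I C)\,\zeta, \zeta)\big| \leq (\tan\theta)\,((\R C)\,\zeta, \zeta)
\qquad(\zeta \in \Ci^d).
\]
I would plug in $\zeta = \xi + i\eta$ with $\xi, \eta \in \Ri^d$ and expand both sides.

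First I would compute $((\R C)\,\zeta, \zeta)$. Writing $(\R C)(\xi + i\eta) = (R_s\xi - B_a\eta) + i(R_s\eta + B_a\xi)$ and then taking the complex inner product against $\xi + i\eta$, the real part collects the four terms $(R_s\xi, \xi)$, $(R_s\eta, \eta)$, $(B_a\xi, \eta)$ and $-(B_a\eta, \xi)$. Using that $B_a$ is anti-symmetric the last two combine to $2\,(B_a\xi, \eta)$, yielding
\[
((\R C)\,\zeta, \zeta) = (R_s\xi, \xi) + (R_s\eta, \eta) + 2\,(B_a\xi, \eta).
\]
An entirely analogous calculation for $\I C = B_s - i\,R_a$, this time using that $R_a$ is anti-symmetric while $B_s$ is symmetric, gives
\[
((\I C)\,\zeta, \zeta) = (B_s\xi, \xi) + (B_s\eta, \eta) - 2\,(R_a\xi, \eta).
\]

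Both quantities are real (as they should be, since $\R C$ and $\I C$ are self-adjoint), so no imaginary parts need to be discarded. Substituting these two identities into the sector inequality displayed above gives exactly the claim of the lemma.

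There is no real obstacle here: the only thing to be careful about is the bookkeeping in the sesquilinear expansion, specifically tracking the signs that arise from conjugating $\xi + i\eta$ in the second slot and using the (anti-)symmetry of each of the four blocks $R_s, R_a, B_s, B_a$ to pair off cross terms. Conceptually, the result just says that the two sides of the desired inequality are, respectively, $\pm\I(C\zeta,\zeta)$ and $(\tan\theta)\,\R(C\zeta,\zeta)$ for $\zeta = \xi + i\eta$.
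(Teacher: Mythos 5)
Your proposal is correct and follows exactly the paper's own argument: apply the sector inequality \eqref{sector} with $\zeta = \xi + i\eta$ and expand $((\R C)\zeta,\zeta)$ and $((\I C)\zeta,\zeta)$ using the (anti-)symmetry of $R_s, B_a, B_s, R_a$. The bookkeeping you describe matches the two displayed identities in the paper's proof.
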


\begin{proof}
Let $\xi, \eta \in \Ri^d$.
Then
\[
\big( (\I C) \, (\xi + i \, \eta), \xi + i \, \eta \big) 
= (B_s \, \xi, \xi) + (B_s \, \eta, \eta) - 2 \, (R_a \, \xi, \eta)
\]
and 
\[
\big( (\R C) \, (\xi + i \, \eta), \xi + i \, \eta \big) 
= (R_s \, \xi, \xi) + (R_s \, \eta, \eta) + 2 \, (B_a \, \xi, \eta).
\]
The claim is now immediate from $\eqref{sector}$.
\end{proof}

\begin{lemm} \label{Ra < Rs}
Suppose $B_a = 0$.
Then 
\[
\big| (R_a \, \xi, \eta) \big| 
\leq (\tan \theta) \, \Big( (R_s \, \xi, \xi) + (R_s \, \eta, \eta) \Big)
\]
for all $\xi, \eta \in \Ri^d$.
\end{lemm}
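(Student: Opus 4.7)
The plan is to chain together the two preceding lemmas. When $B_a=0$, Lemma~\ref{sector equiv form} simplifies to
\[
\big| (B_s \, \xi, \xi) + (B_s \, \eta, \eta) - 2 \, (R_a \, \xi, \eta) \big|
\leq (\tan\theta) \, \Big( (R_s \, \xi, \xi) + (R_s \, \eta, \eta) \Big),
\]
so the first step is to apply the triangle inequality to isolate $2|(R_a\,\xi,\eta)|$ on the left, which produces two extra diagonal $B_s$-terms on the right.

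The second step is to control those two diagonal terms via Lemma~\ref{W Bs < Rs} with $\eta=\xi$, which gives $|(B_s\,\xi,\xi)| \leq (\tan\theta)\,(R_s\,\xi,\xi)$ and similarly for $\eta$. Adding these bounds into the inequality yields a right-hand side of the form $2(\tan\theta)\big((R_s\,\xi,\xi) + (R_s\,\eta,\eta)\big)$, and dividing by $2$ delivers exactly the claim.

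There is essentially no obstacle here: the real work was carried out already in Lemmas~\ref{sector equiv form} and~\ref{W Bs < Rs}, and this result is their direct combination under the additional hypothesis $B_a=0$ that kills the Hermitian-positivity correction on the right-hand side of Lemma~\ref{sector equiv form}. (One could alternatively rescale $\xi \mapsto t\xi$ and optimise in $t>0$ to obtain the sharper geometric-mean estimate $|(R_a\,\xi,\eta)| \leq 2(\tan\theta)\sqrt{(R_s\,\xi,\xi)(R_s\,\eta,\eta)}$, but AM--GM then recovers the stated arithmetic-mean bound, so this is unnecessary for the stated lemma.)
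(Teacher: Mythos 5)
Your argument is exactly the paper's proof: apply Lemma~\ref{sector equiv form} under the hypothesis $B_a=0$, isolate $2|(R_a\,\xi,\eta)|$ by the triangle inequality, and absorb the diagonal $(B_s\,\xi,\xi)$ and $(B_s\,\eta,\eta)$ terms via Lemma~\ref{W Bs < Rs}. The rescaling aside is a correct observation but is not needed, and the paper does not pursue it.
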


\begin{proof}
Since $B_a = 0$, Lemma \ref{sector equiv form} gives
\[
\big| (B_s \, \xi, \xi) + (B_s \, \eta, \eta) - 2 \, (R_a \, \xi, \eta) \big|
\leq 
(\tan\theta) \, \Big( (R_s \, \xi, \xi) + (R_s \, \eta, \eta) \Big).
\]
The result now follows from the triangle inequality and Lemma \ref{W Bs < Rs}.
\end{proof}

\begin{lemm} \label{W R<trR}
Let $Q$ be a positive matrix and $U$ a complex $d \times d$ matrix.
Then
\[
(Q \, U \, \xi, U \, \xi) \leq \tr(U^* \, Q \, U) \, \|\xi\|^2
\]
for all $\xi \in \Ci^d$.
\end{lemm}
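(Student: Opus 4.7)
The plan is to reduce everything to a standard eigenvalue/trace bound for a positive semi-definite matrix. First I would rewrite the inner product in the form
\[
(Q\,U\,\xi, U\,\xi) = (U^*\,Q\,U\,\xi, \xi),
\]
so the statement becomes a quadratic form estimate for the single matrix $M := U^*\,Q\,U$.

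Next I would check that $M$ is positive semi-definite: since $Q$ is positive, $(M\,\xi,\xi) = (Q\,U\,\xi, U\,\xi) \geq 0$ for every $\xi \in \Ci^d$, and $M$ is visibly self-adjoint. Consequently $M$ has non-negative real eigenvalues $\lambda_1, \ldots, \lambda_d$, and by the variational characterisation of the largest eigenvalue,
\[
(M\,\xi,\xi) \leq \lambda_{\max}(M)\,\|\xi\|^2.
\]

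Finally, because each $\lambda_j \geq 0$, one has $\lambda_{\max}(M) \leq \sum_{j=1}^d \lambda_j = \tr(M) = \tr(U^*\,Q\,U)$. Combining these two inequalities yields the claim. There is essentially no obstacle here; the only point worth stating explicitly is the positive semi-definiteness of $U^*\,Q\,U$, which is what allows passage from the spectral radius bound to the trace bound.
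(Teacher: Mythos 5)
Your proof is correct and follows essentially the same route as the paper: both establish that $U^*\,Q\,U$ is positive semi-definite and then bound its quadratic form by $\tr(U^*\,Q\,U)\,\|\xi\|^2$; the paper simply phrases the last step as the operator inequality $U^*\,Q\,U \leq \tr(U^*\,Q\,U)\,I$ rather than spelling out the eigenvalue comparison.
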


\begin{proof}
Since $Q$ is a positive matrix, we have $(Q \, U \, \xi, U \, \xi) \geq 0$ for all $\xi \in \Ci^d$.
It follows that $U^* \, Q \, U \geq 0$.
Hence $U^* \, Q \, U \leq \tr(U^* \, Q \, U) \, I$, where $I$ denotes the identity matrix.
This justifies the claim.
\end{proof}

\begin{lemm} \label{positive definite}
We have the following.
\begin{tabel}
\item $(R_s \, \xi, \xi) \geq 0$ for all $\xi \in \Ci^d$. 
\label{Rs positive 1}
\item $\left( \big( (\tan\theta) \, R_s \pm B_s \big) \, \xi, \xi \right) \geq 0$ for all $\xi \in \Ci^d$.
\label{Rs positive 2}
\item Suppose $B_a = 0$. 
Then $\left( \big( 2 \, (\tan\theta) \, R_s \pm i \, R_a \big) \, \xi, \xi \right) \geq 0$ for all $\xi \in \Ci^d$.
\label{Rs positive 3}
\end{tabel}
\end{lemm}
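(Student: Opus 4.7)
The plan is to reduce all three inequalities to already-established real-vector bounds by writing an arbitrary $\xi \in \Ci^d$ as $\xi = \alpha + i\beta$ with $\alpha, \beta \in \Ri^d$ and expanding $(T\,\xi, \xi)$ in the sesquilinear inner product for each real matrix $T \in \{R_s, B_s, R_a\}$ that appears. The routine computation
\[
(T\,\xi,\xi) = (T\alpha,\alpha) + (T\beta,\beta) - i\,(T\alpha,\beta) + i\,(T\beta,\alpha)
\]
is the common starting point; how it simplifies depends on the symmetry of $T$.

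For part \ref{Rs positive 1}, since $R_s$ is real symmetric, $(R_s\alpha,\beta) = (R_s\beta,\alpha)$ on $\Ri^d$, so the two imaginary terms cancel and $(R_s\,\xi,\xi) = (R_s\alpha,\alpha) + (R_s\beta,\beta)$. The nonnegativity on real vectors was already noted in the proof of Lemma~\ref{Rs} (as a consequence of $((\R C)\xi,\xi) \geq 0$ together with the antisymmetry of $B_a$), which closes the case. For part \ref{Rs positive 2}, the same cancellation applies to both $R_s$ and $B_s$, reducing the claim to $|(B_s\alpha,\alpha)| \leq (\tan\theta)(R_s\alpha,\alpha)$ for $\alpha \in \Ri^d$; this is precisely the real-vector inequality that served as the input to Lemma~\ref{W Bs < Rs}.

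For part \ref{Rs positive 3} the antisymmetric case is different: because $R_a^T = -R_a$, the diagonal terms $(R_a\alpha,\alpha)$ and $(R_a\beta,\beta)$ vanish, and $(R_a\alpha,\beta) = -(R_a\beta,\alpha)$, so the two surviving imaginary terms reinforce rather than cancel, giving
\[
(i\,R_a\,\xi,\xi) = i\big(-i(R_a\alpha,\beta) + i(R_a\beta,\alpha)\big) = 2\,(R_a\alpha,\beta).
\]
Combined with $(R_s\,\xi,\xi) = (R_s\alpha,\alpha) + (R_s\beta,\beta)$ from above, the left-hand side of \ref{Rs positive 3} equals $2(\tan\theta)\bigl((R_s\alpha,\alpha)+(R_s\beta,\beta)\bigr) \pm 2\,(R_a\alpha,\beta)$, and nonnegativity follows immediately from Lemma~\ref{Ra < Rs} (which requires $B_a = 0$, exactly the hypothesis of \ref{Rs positive 3}).

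The only place where care is needed is the antisymmetric computation in \ref{Rs positive 3}, specifically keeping track of the factors of $i$ so that the $R_a$-contribution emerges as a real quantity with the correct sign; once that is done, the lemmas of the preceding section do all the work and there is no substantive obstacle.
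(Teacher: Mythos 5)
Your proposal is correct and follows essentially the same route as the paper: decompose $\xi = \xi_1 + i\xi_2$ with $\xi_1,\xi_2 \in \Ri^d$, use symmetry/antisymmetry to reduce $(R_s\xi,\xi)$, $(B_s\xi,\xi)$, $(R_a\xi,\xi)$ to real-vector expressions, and invoke Lemmas \ref{Rs}, \ref{W Bs < Rs} and \ref{Ra < Rs}. You merely spell out the intermediate computations a bit more explicitly than the paper does.
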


\begin{proof}
Let $\xi \in \Ci^d$.
Write $\xi = \xi_1 + i \, \xi_2$, where $\xi_1, \xi_2 \in \Ri^d$.
We note that 
\[
(R_s \, \xi, \xi) = (R_s \, \xi_1, \xi_1) + (R_s \, \xi_2, \xi_2)
\]
and
\[
(B_s \, \xi, \xi) = (B_s \, \xi_1, \xi_1) + (B_s \, \xi_2, \xi_2).
\]
Also
\[
(R_a \, \xi, \xi) = -2i \, (R_a \, \xi_1, \xi_2).
\]
The claim now follows from Lemmas \ref{Rs}, \ref{W Bs < Rs} and \ref{Ra < Rs}.
\end{proof}

Next let $\alpha \in (-\frac{\pi}{2} + \theta, \frac{\pi}{2} - \theta)$ and write $C_\alpha = e^{i \alpha} \, C$.
In a similar manner as above, we define $\R (C_\alpha)$, $\I (C_\alpha)$, $R_\alpha$, $B_\alpha$, $R_{s,\alpha}$, $R_{a,\alpha}$, $B_{s,\alpha}$ and $B_{a,\alpha}$.
Note that we also have
\[
\R (C_\alpha) = R_{s,\alpha} + i \, B_{a,\alpha}
\quad
{\rm and}
\quad
\I (C_\alpha) = B_{s,\alpha} - i \, R_{a,\alpha}.
\]

\begin{lemm} \label{W Oleinik 2}
Let $j \in \{1, \ldots, d\}$.
Suppose $U$ is a complex $d \times d$ matrix with $U^T = U$.
Then 
\[
|\tr ((\D_j C_\alpha) \, U)|^2 \leq M \, \tr(U \, R_{s,\alpha} \, \overline{U}),
\]
where 
\[
M = 32 \, d \, \big( 1 + \tan(\theta + \alpha) \big)^2 \, \|\D_l^2 C\|_\infty.
\]
\end{lemm}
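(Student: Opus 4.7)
The plan is to reduce the bound to the classical scalar Oleinik inequality ``$f\in C^2$ and $f\geq 0$ imply $|\D_j f|^2\leq 2\|\D_j^2 f\|_\infty f$'' by means of a spectral decomposition of the test matrix $U$. The opening observation is that, since $U^T=U$, any anti-symmetric $M$ satisfies $\tr(MU)=0$; hence $\tr((\D_j C_\alpha)U)$ equals $\tr((\D_j(C_\alpha)_s)U)$, where $(C_\alpha)_s=\tfrac{1}{2}(C_\alpha+C_\alpha^T)=R_{s,\alpha}+iB_{s,\alpha}$. This splits the quantity to be bounded into a real-part contribution $\tr((\D_j R_{s,\alpha})U)$ and an imaginary-part contribution $\tr((\D_j B_{s,\alpha})U)$.

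The heart of the argument is the following matrix analogue of Oleinik's inequality: for any $C^2$ positive semi-definite real symmetric matrix-valued function $A$ and any complex symmetric $V\in\Ci^{d\times d}$,
\[
|\tr((\D_j A)V)|^2 \;\leq\; 2d\,\|\D_j^2 A\|_\infty\,\tr(V A\overline{V}).
\]
To establish this, write $V=X+iY$ with $X,Y$ real symmetric; since $\D_j A$ is real, $\tr((\D_j A)V)$ has real part $\tr((\D_j A)X)$ and imaginary part $\tr((\D_j A)Y)$. Now diagonalise $X=\sum_k\lambda_k v_k v_k^T$ with real orthonormal eigenvectors and apply the scalar Oleinik inequality to each non-negative function $f_k(x):=(A(x)v_k,v_k)$ to obtain $|\D_j f_k|^2\leq 2\|\D_j^2 A\|_\infty f_k$. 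Summing against $\lambda_k$ and using Cauchy--Schwarz over the $d$ indices yields $|\tr((\D_j A)X)|^2\leq 2d\|\D_j^2 A\|_\infty \tr(XAX)$, and likewise for $Y$. Finally, because $A$, $X$, $Y$ are all real symmetric, the identity $\tr(VA\overline{V})=\tr(XAX)+\tr(YAY)+i\tr(A[X,Y])$ combined with the vanishing of $\tr(A[X,Y])$ (by cyclicity of the trace) produces the desired right-hand side.

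To assemble the final estimate, apply the matrix Oleinik inequality with $A=R_{s,\alpha}$---which is positive semi-definite by Lemma \ref{positive definite}\ref{Rs positive 1}---to control the $R_{s,\alpha}$-contribution. For the $B_{s,\alpha}$-contribution, introduce $F_\pm:=\tan(\theta+\alpha)R_{s,\alpha}\pm B_{s,\alpha}$; both matrices are positive semi-definite by the sector condition applied to $C_\alpha$ (a rotation of Lemma \ref{W Bs < Rs}), so the matrix Oleinik inequality applies to each. Using the identity $\D_j B_{s,\alpha}=\tfrac{1}{2}(\D_j F_+ - \D_j F_-)$, the estimate $\|\D_j^2 F_\pm\|_\infty\leq(1+\tan(\theta+\alpha))\|\D_j^2 C\|_\infty$, and the identity $\tr(UF_+\overline{U})+\tr(UF_-\overline{U})=2\tan(\theta+\alpha)\tr(UR_{s,\alpha}\overline{U})$ together with non-negativity of each summand (which follows from $F_\pm\geq 0$), one extracts the factor $(1+\tan(\theta+\alpha))^2$. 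Summing the two contributions and absorbing the numerical slack yields the announced constant $M=32d(1+\tan(\theta+\alpha))^2\|\D_j^2 C\|_\infty$.

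The principal obstacle is the matrix Oleinik step. Producing exactly $\tr(VA\overline{V})$ on the right-hand side---rather than the weaker quantity $\tr(A)\tr(V\overline{V})$---requires exploiting the spectral decomposition of $V$ and the vanishing of the commutator trace $\tr(A[X,Y])$; it is precisely this identification that allows the bound for $\tr((\D_j C_\alpha)U)$ to be expressed in terms of the single trace $\tr(UR_{s,\alpha}\overline{U})$.
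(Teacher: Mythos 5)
The paper's own proof of this lemma is a one-line citation to \cite[Corollary 2.6]{Do1}, so there is no in-paper argument to compare against; what you have done is reconstruct what is almost certainly the argument behind that corollary, namely the matrix version of the Oleinik--Glaeser inequality obtained by diagonalising the test matrix pointwise and applying the scalar inequality $|\partial_j f|^2\le 2\|\partial_j^2 f\|_\infty f$ to each quadratic form $x\mapsto (A(x)v_k,v_k)$. The structure is sound: the reduction to the symmetric part of $\partial_j C_\alpha$ via $\tr(MU)=0$ for $M$ anti-symmetric is correct; the spectral decomposition plus Cauchy--Schwarz over the $d$ eigenvalues yields exactly $\tr(XAX)$ rather than the cruder $\tr(A)\tr(X^2)$; and the vanishing of $\tr(A[X,Y])$ for real symmetric $A,X,Y$ makes $\tr(VA\overline V)$ real and non-negative, which is what lets the two pieces recombine. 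The trick of writing $B_{s,\alpha}=\tfrac12(F_+-F_-)$ with $F_\pm=\tan(\theta+|\alpha|)R_{s,\alpha}\pm B_{s,\alpha}\ge0$ and then re-summing $\tr(UF_+\overline U)+\tr(UF_-\overline U)=2\tan(\theta+|\alpha|)\tr(UR_{s,\alpha}\overline U)$ is the right way to control the skew contribution.

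Two small imprecisions, neither of which affects the outcome. First, the decomposition $\tr((\partial_j(C_\alpha)_s)U)=\tr((\partial_j R_{s,\alpha})U)+i\,\tr((\partial_j B_{s,\alpha})U)$ is not a real/imaginary split, since $U$ is complex and each summand is a complex number; you must recombine the two contributions using $|a+b|^2\le 2|a|^2+2|b|^2$, not Pythagoras. Second, the rotation $C_\alpha=e^{i\alpha}C$ sends $\Sigma_\theta$ into $\Sigma_{\theta+|\alpha|}$, so positive semi-definiteness of $F_\pm$ and of $R_{s,\alpha}$ requires $\tan(\theta+|\alpha|)$ rather than $\tan(\theta+\alpha)$ (the paper's statement of the lemma has this same infelicity; compare with the correct $\tan(\theta+|\alpha|)$ used later in Proposition~\ref{W 2nd ineq}). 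Both issues are absorbed comfortably by the slack in the constant $32$. Also note that the positive semi-definiteness of $R_{s,\alpha}$ is not literally Lemma~\ref{positive definite}\ref{Rs positive 1}, which concerns $R_s$; you must rerun that argument for the rotated matrix $C_\alpha$, as you do implicitly for $F_\pm$.
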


\begin{proof}
It follows from \cite[Corollary 2.6]{Do1} that 
\begin{eqnarray*}
|\tr ((\D_j C_\alpha) \, U)|^2 
&\leq& 32 \, d \, \big( 1 + \tan(\theta + \alpha) \big)^2 \, \|\D_l^2 (e^{i \alpha} \, C)\|_\infty \, 
		\tr(U \, R_{s,\alpha} \, \overline{U})
\\
&\leq& 32 \, d \, \big( 1 + \tan(\theta + \alpha) \big)^2 \, \|\D_l^2 C\|_\infty \, 
		\tr(U \, R_{s,\alpha} \, \overline{U})
\end{eqnarray*}
as required.
\end{proof}

\begin{lemm} \label{C alpha identities}
Suppose $B_a = 0$.
Then the following hold.
\begin{tabeleq}
\item \label{Re alpha} $\R (C_\alpha) = R_s \, \cos\alpha - B_s \, \sin\alpha + i \, R_a \, \sin\alpha$.
\item \label{Im alpha} $\I (C_\alpha) = R_s \, \sin\alpha + B_s \, \cos\alpha - i \, R_a \, \cos\alpha$.
\item \label{R alpha} 
$R_\alpha = R_s \, \cos\alpha + R_a \, \cos\alpha - B_s \, \sin\alpha$,

$R_{s,\alpha} = R_s \, \cos\alpha - B_s \, \sin\alpha$,

$R_{a,\alpha} = R_a \, \cos\alpha$.
\item \label{B alpha}
$B_\alpha = R_s \, \sin\alpha + R_a \, \sin\alpha + B_s \, \cos\alpha$,

$B_{s,\alpha} = R_s \, \sin\alpha + B_s \, \cos\alpha$,

$B_{a,\alpha} = R_a \, \sin\alpha$.
\end{tabeleq} 
\end{lemm}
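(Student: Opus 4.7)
The proof is essentially a direct computation, made clean once we write $C$ in a suitable form. Since $B_a = 0$, we have $B = B_s$ and so
\[
C = R + iB = R_s + R_a + i\,B_s,
\]
from which $C_\alpha = e^{i\alpha}\,C = (\cos\alpha + i\sin\alpha)(R_s + R_a + i\,B_s)$. Expanding and grouping the purely-real-entry terms and the purely-imaginary-entry terms separately gives
\[
C_\alpha = \bigl(R_s\cos\alpha + R_a\cos\alpha - B_s\sin\alpha\bigr) + i\bigl(R_s\sin\alpha + R_a\sin\alpha + B_s\cos\alpha\bigr),
\]
which is exactly the decomposition $C_\alpha = R_\alpha + i\,B_\alpha$ with real matrices $R_\alpha, B_\alpha$. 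This immediately yields the displayed formulas for $R_\alpha$ and $B_\alpha$ in \ref{R alpha} and \ref{B alpha}.

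Next I would read off the symmetric/antisymmetric splittings. Since $R_s$ and $B_s$ are symmetric and $R_a$ is antisymmetric, the only antisymmetric contribution to $R_\alpha$ is $R_a\cos\alpha$ and the only antisymmetric contribution to $B_\alpha$ is $R_a\sin\alpha$, hence
\[
R_{s,\alpha} = R_s\cos\alpha - B_s\sin\alpha,\quad R_{a,\alpha} = R_a\cos\alpha,\quad B_{s,\alpha} = R_s\sin\alpha + B_s\cos\alpha,\quad B_{a,\alpha} = R_a\sin\alpha,
\]
which completes parts \ref{R alpha} and \ref{B alpha}.

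For parts \ref{Re alpha} and \ref{Im alpha} I would compute $\R(C_\alpha) = \tfrac{1}{2}(C_\alpha + C_\alpha^*)$ and $\I(C_\alpha) = \tfrac{1}{2i}(C_\alpha - C_\alpha^*)$ using the real-entry decomposition of $C_\alpha$ above: taking complex conjugate just flips the sign of the second bracket, and taking transpose turns $R_s, B_s$ into themselves and $R_a$ into $-R_a$. Averaging $C_\alpha$ and $C_\alpha^*$ gives
\[
\R(C_\alpha) = (R_s\cos\alpha - B_s\sin\alpha) + i\,(R_a\sin\alpha + B_a\cos\alpha),
\]
and an analogous computation yields
\[
\I(C_\alpha) = (R_s\sin\alpha + B_s\cos\alpha) - i\,(R_a\cos\alpha - B_a\sin\alpha).
\]
Setting $B_a = 0$ in both expressions recovers \ref{Re alpha} and \ref{Im alpha}.

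There is no genuine obstacle here; the entire content is careful bookkeeping. The only thing to be watchful about is keeping the two notions of ``real/imaginary'' distinct — the decomposition $C = R + iB$ into matrices with real entries on one hand, and the Hermitian decomposition $C = \R C + i\,\I C$ on the other — and tracking how transposition acts on each of $R_s, R_a, B_s$. Once the identity $C = R_s + R_a + i B_s$ is in hand, everything else is a one-line expansion.
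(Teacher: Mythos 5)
Your proof is correct and is essentially the paper's own argument: the paper simply states that the identities ``follow directly from the definition of $C$ and $C_\alpha$,'' which is precisely the bookkeeping expansion of $e^{i\alpha}(R_s + R_a + iB_s)$ that you carry out. The only minor stylistic wrinkle is in the last paragraph, where your displayed formulas for $\R(C_\alpha)$ and $\I(C_\alpha)$ retain a $B_a$ term even though you had already set $B_a = 0$ to obtain the decomposition $C_\alpha = R_\alpha + iB_\alpha$ a few lines earlier; the general-then-specialize phrasing is harmless but slightly at odds with the preceding computation, and you could instead just read off $\R(C_\alpha) = R_{s,\alpha} + i B_{a,\alpha}$ and $\I(C_\alpha) = B_{s,\alpha} - i R_{a,\alpha}$ from the symmetric/antisymmetric splittings you already found.
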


\begin{proof}
These identities follow directly from the definition of $C$ and $C_\alpha$.
\end{proof}

\section{Sectorial property}

Let $p \in (1, \infty)$.
Let $\Omega$, $\theta$, $C$ and $A_p$ be as in Section \ref{Intro}.
In this section we prove Theorem \ref{main theorem}.
A convenient tool that we will use repeatedly is the formula of integration by parts in Sobolev spaces given in the next theorem.
The theorem is immediate from the proof of \cite[Proposition 3.5]{MS}.
We emphasise that we do not require $C = C^T$ in this theorem (cf.\ \cite[Theorem 3.1]{MS} for the same result but with extra assumption that $C = C^T$).

\begin{thrm} \label{BG integration by parts on Bp}
Let $u \in D(A_p)$.
Then
\begin{eqnarray}
\int_{[u \neq 0]} (A_p u) \, |u|^{p-2} \, \overline{u}
& = & \int_{[u \neq 0]} |u|^{p-2} \, (C \, \nabla \overline{u}, \nabla \overline{u})
\nonumber
\\
& &
{} + (p - 2) \, \int_{[u \neq 0]} |u|^{p-4} \, 
	\big( C \, \R(u \, \nabla \overline{u}), \R(u \, \nabla \overline{u}) \big)
\nonumber
\\
& &
	{} - i \, (p - 2) \, \int_{[u \neq 0]} |u|^{p-4} \, 
	\big( C \, \R(u \, \nabla \overline{u}), \I(u \, \nabla \overline{u}) \big).
	\label{ibpMS}
\end{eqnarray}
\end{thrm}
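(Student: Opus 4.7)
The plan is to derive \eqref{ibpMS} by integration by parts combined with the pointwise chain rule for $|u|^{p-2}\,\overline{u}$ on $[u\neq 0]$, and then to rearrange the resulting integrand algebraically into the form involving $\R(u\,\nabla\overline{u})$ and $\I(u\,\nabla\overline{u})$. For smooth, nowhere-vanishing $u$ this is a routine calculation; the passage to general $u \in D(A_p)$ proceeds by the regularisation scheme of the proof of \cite[Proposition 3.5]{MS}. A key point which explains why the hypothesis $C = C^T$ can be dropped is that the algebraic rearrangement matches the scalar products $c_{kl}\,\partial_k u\,\partial_l\overline{u}$ and $c_{kl}\,\partial_k u\,\partial_l u$ on the two sides without ever transposing~$C$.

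First I would record the chain rule on $[u\neq 0]$,
\[
\partial_l(|u|^{p-2}\,\overline{u}) = \tfrac{p}{2}\,|u|^{p-2}\,\partial_l\overline{u} + \tfrac{p-2}{2}\,|u|^{p-4}\,\overline{u}^2\,\partial_l u,
\]
which follows from $\partial_l|u| = |u|^{-1}\,\R(\overline{u}\,\partial_l u)$. Since $u\in W^{1,p}_0(\Omega)$, integration by parts (with no boundary term) gives
\[
\int_{[u\neq 0]}(A_p u)\,|u|^{p-2}\,\overline{u} = \int_{[u\neq 0]}\sum_{k,l}c_{kl}\,\partial_k u\,\partial_l(|u|^{p-2}\,\overline{u}),
\]
and substituting the chain rule produces the integrand
\[
\tfrac{p}{2}\,|u|^{p-2}\sum_{k,l}c_{kl}\,\partial_k u\,\partial_l\overline{u} + \tfrac{p-2}{2}\,|u|^{p-4}\,\overline{u}^2\sum_{k,l}c_{kl}\,\partial_k u\,\partial_l u.
\]
On the other hand, from $\overline{u\,\partial_k\overline{u}} = \R(u\,\partial_k\overline{u}) - i\,\I(u\,\partial_k\overline{u})$ the second and third integrands of \eqref{ibpMS} collapse to $(p-2)\,|u|^{p-4}\sum_{k,l}c_{kl}\,\R(u\,\partial_l\overline{u})\,\overline{u}\,\partial_k u$. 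Expanding $\R(u\,\partial_l\overline{u}) = \tfrac{1}{2}(u\,\partial_l\overline{u} + \overline{u}\,\partial_l u)$ splits this last expression into $\tfrac{p-2}{2}\,|u|^{p-2}\sum c_{kl}\,\partial_k u\,\partial_l\overline{u} + \tfrac{p-2}{2}\,|u|^{p-4}\,\overline{u}^2\sum c_{kl}\,\partial_k u\,\partial_l u$, which together with the first integrand of \eqref{ibpMS} matches the previous display exactly.

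The genuine obstacle is the rigorous justification for arbitrary $u \in W^{2,p}(\Omega) \cap W^{1,p}_0(\Omega)$, since $|u|^{p-4}$ need not be locally integrable when $p < 4$. This is handled as in \cite{MS}: replace $|u|$ by $|u|_\varepsilon := (|u|^2 + \varepsilon^2)^{1/2}$, run the chain rule and integration by parts against the smooth, bounded test function $|u|_\varepsilon^{p-2}\,\overline{u}$, and pass to the limit $\varepsilon \downarrow 0$ by dominated convergence, using $|\nabla|u|_\varepsilon| \leq |\nabla u|$ and $\nabla u = 0$ a.e.\ on $[u=0]$. None of the steps of the limit argument invokes the symmetry of $C$, so \eqref{ibpMS} holds as stated for arbitrary $(c_{kl}) \in W^{1,\infty}(\Omega, \Ci)$.
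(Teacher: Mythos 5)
Your proposal matches the paper's approach: the paper itself gives no independent argument, stating only that the identity ``is immediate from the proof of [MS, Proposition 3.5]'' and that the symmetry assumption $C = C^{T}$ of [MS, Theorem 3.1] is not needed. You have reconstructed exactly that argument---the chain rule $\partial_l(|u|^{p-2}\overline{u}) = \tfrac{p}{2}|u|^{p-2}\partial_l\overline{u} + \tfrac{p-2}{2}|u|^{p-4}\overline{u}^{\,2}\partial_l u$ on $[u\neq 0]$, integration by parts (with no boundary term since $u\in W^{1,p}_0$), the algebraic recombination via $\overline{u\nabla\overline{u}} = \R(u\nabla\overline{u}) - i\,\I(u\nabla\overline{u})$, and the $|u|_\varepsilon = (|u|^2+\varepsilon^2)^{1/2}$ regularisation to make it rigorous---and you correctly pinpointed that the bilinear expressions $\sum c_{kl}\,\partial_k u\,\partial_l\overline{u}$ and $\sum c_{kl}\,\partial_k u\,\partial_l u$ match term by term without ever transposing $C$, which is precisely the observation the paper highlights.
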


Using Theorem \ref{BG integration by parts on Bp} we obtain the following.

\begin{prop} \label{Re and Im}
Let $u \in D(A_p)$.
Write $u \, \nabla \overline{u} = \xi + i\, \eta$, where $\xi, \eta \in \Ri^d$.
Then
\begin{eqnarray*}
\R (A_p u, |u|^{p-2} \, u \, \one_{[u \neq 0]})
&=& \int_{[u \neq 0]} |u|^{p-4} \, \Big( 
	(p-1) \, (R_s \, \xi, \xi) + (R_s \, \eta, \eta) \Big.
	\\
	&&
	\Big. \hskip 23mm + (p-2) \, (B_s \, \xi, \eta) + p \, (B_a \, \xi, \eta) \Big)
\end{eqnarray*}
and
\begin{eqnarray*}
\I (A_p u, |u|^{p-2} \, u \, \one_{[u \neq 0]})
&=& \int_{[u \neq 0]} |u|^{p-4} \, \Big(
	(p-1) \, (B_s \, \xi, \xi) + (B_s \, \eta, \eta) \Big.
	\\
	&&
	\Big. \hskip 23mm - (p-2) \, (R_s \, \xi, \eta) - p \, (R_a \, \xi, \eta) \Big).
\end{eqnarray*}
\end{prop}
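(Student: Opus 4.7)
The plan is a direct computation based on Theorem \ref{BG integration by parts on Bp}. Starting from the identity \eqref{ibpMS}, I would rewrite each of the three integrands purely in terms of the real vectors $\xi,\eta$ and the four matrix blocks $R_s, R_a, B_s, B_a$, and then separate real and imaginary parts before reassembling.

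The main step is the first integrand. Since $(C\,\cdot,\cdot)$ is conjugate-bilinear, scaling the second slot by the scalar $u$ pulls out $|u|^2$, so
\[
|u|^{p-2} (C \, \nabla \overline{u}, \nabla \overline{u}) = |u|^{p-4} (C(u \, \nabla \overline{u}), u \, \nabla \overline{u}) = |u|^{p-4} (C(\xi + i\eta), \xi + i\eta).
\]
Expanding $(C(\xi+i\eta), \xi+i\eta)$ with $C = R_s + R_a + i(B_s + B_a)$ and using the standard facts that $(R_a v, v) = (B_a v, v) = 0$ for real $v$ together with $(R_a \eta, \xi) = -(R_a \xi, \eta)$ and $(B_a \eta, \xi) = -(B_a \xi, \eta)$ produces exactly the type of expression already met in the proof of Lemma \ref{sector equiv form}: the real part is $(R_s\xi,\xi) + (R_s\eta,\eta) + 2(B_a\xi,\eta)$ and the imaginary part is $(B_s\xi,\xi) + (B_s\eta,\eta) - 2(R_a\xi,\eta)$.

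The other two integrands are immediate: for real $\xi,\eta$ one has $(C\xi,\xi) = (R_s\xi,\xi) + i(B_s\xi,\xi)$ and $(C\xi,\eta) = (R_s\xi,\eta) + (R_a\xi,\eta) + i\bigl[(B_s\xi,\eta) + (B_a\xi,\eta)\bigr]$. Substituting all three expansions into \eqref{ibpMS} with the prefactors $1$, $(p-2)$ and $-i(p-2)$ and regrouping, the coefficient on $(R_s\xi,\xi)$ sums to $1 + (p-2) = p-1$, the coefficient on $(B_a\xi,\eta)$ to $2 + (p-2) = p$, and $(R_s\eta,\eta)$, $(B_s\xi,\eta)$ keep coefficients $1$ and $(p-2)$ respectively; a symmetric bookkeeping (with sign flips coming from the $-i(p-2)$ factor acting on $R_s$ and $R_a$ cross terms) yields the imaginary formula. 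Integrating over $[u \neq 0]$ finishes the proof. There is no conceptual obstacle here; the only care required is to track the signs produced by the antisymmetry of $R_a$ and $B_a$ and by the prefactor $-i(p-2)$ when splitting into real and imaginary parts.
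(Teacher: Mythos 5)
Your proposal is correct and reproduces the paper's own argument: expand each of the three integrands in \eqref{ibpMS} in terms of the real vectors $\xi,\eta$ and the blocks $R_s,R_a,B_s,B_a$, separate real and imaginary parts, and collect coefficients. The paper writes out only the real-part bookkeeping and remarks that the imaginary part is similar; your version carries both through (and your imaginary part of $(C(\xi+i\eta),\xi+i\eta)$, namely $(B_s\xi,\xi)+(B_s\eta,\eta)-2(R_a\xi,\eta)$, has the correct sign -- the paper's displayed intermediate expansion in fact has a sign slip in its imaginary part, which is harmless there since only the real part is used).
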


\begin{proof}
We will prove the first inequality only.
The second is similar.

Consider \eqref{ibpMS}.
We have
\begin{eqnarray*}
|u|^2 \, (C \, \nabla \overline{u}, \nabla \overline{u})
& = & (C \, u \, \nabla \overline{u}, u \, \nabla \overline{u})
	= \big( C (\xi + i \, \eta), \xi + i \, \eta \big)
\\
& = & (R \, \xi, \xi) + (R \, \eta, \eta) + (B \, \xi, \eta) - (B \, \eta, \xi)
	\\
	& {} & {} - i \, \big( (R \, \eta, \xi) - (R \, \xi, \eta) + (B \, \xi, \xi) + (B \, \eta, \eta) \big).
\end{eqnarray*}
Therefore
\begin{eqnarray*}
\R \big( |u|^{2} \, (C \, \nabla \overline{u}, \nabla \overline{u}) \big)
& = & (R \, \xi, \xi) + (R \, \eta, \eta) + (B \, \xi, \eta) - (B \, \eta, \xi)
\\
& = & (R_s \, \xi, \xi) + (R_s \, \eta, \eta) + 2 \, (B_a \, \xi, \eta).
\end{eqnarray*}
Also
\[
\R \big( C \, \R(u \, \nabla \overline{u}), \R(u \, \nabla \overline{u}) \big)
= \R (C \, \xi, \xi)
= (R \, \xi, \xi)
= (R_s \, \xi , \xi).
\]
Similarly
\[
\R \big( i \, \big( C \, \R(u \, \nabla \overline{u}), \I(u \, \nabla \overline{u}) \big) \big)
= \R \big( i \, (C \, \xi, \eta) \big)
= -(B \, \xi, \eta)
= -(B_s \, \xi , \eta) -(B_a \, \xi , \eta).
\]
Hence taking the real parts on both sides of \eqref{ibpMS} yields the result.
\end{proof}

%The estimate is now straightforward for $p=2$.
%
%
%\begin{proof}[Proof of \eqref{A2}]
%Let $u \in D(A_p)$.
%Write $u \, \nabla \overline{u} = \xi + i \, \eta$, where $\xi, \eta \in \Ri^d$.
%Using Lemma \ref{Re and Im}, \eqref{A2} becomes
%\[
%\big| (B_s \, \xi, \xi) + (B_s \, \eta, \eta) - 2 \, (R_a \, \xi, \eta) \big|
%\leq
%\tan\theta \, \big( (R_s \, \xi, \xi) + (R_s \, \eta, \eta) + 2 \, (B_a \, \xi, \eta) \big).
%\]
%This is the same as 
%\[
%\big|\big( (\I C) \, (\xi + i \, \eta), \xi + i \, \eta \big) \big| 
%\leq \tan \theta \, \big( (\R C) \, (\xi + i \, \eta), \xi + i \, \eta \big),
%\]
%which is true by hypothesis.
%\end{proof}

The following lemma is essential in the proof of Theorem \ref{main theorem}.

\begin{lemm} \label{ps}
Suppose $|1 - \frac{2}{p}| < \cos \theta$.
Let $\phi = \arccos |1 - \frac{2}{p}|$.
Then
\begin{eqnarray*}
&& \big( \tan (\frac{\pi}{2} - \phi) + \tan\theta \big) \, 
	\big( (R_s \, \xi, \xi) + (R_s \, \eta, \eta) \big)
\\
&\leq& 
\tan (\frac{\pi}{2} - \phi + \theta) \, 
	\big( (R_s \, \xi, \xi) + (R_s \, \eta, \eta) + \frac{p-2}{\sqrt{p-1}} \, (B_s \, \xi, \eta) \big)
\end{eqnarray*}
for all $\xi, \eta \in \Ri^d$.
\end{lemm}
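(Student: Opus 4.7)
The plan is to reduce the stated inequality, via the tangent addition formula, to the estimate on $B_s$ proved in Lemma~\ref{W Bs < Rs}; once this is done, the inequality turns out to be saturated at the worst case for $(B_s\xi,\eta)$.

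First I would translate the $p$-dependent quantities into trigonometric functions of $\phi$. From $\cos\phi = |1 - \tfrac{2}{p}| = \tfrac{|p-2|}{p}$ it follows by squaring that $\sin\phi = \tfrac{2\sqrt{p-1}}{p}$, hence
\[
\frac{|p-2|}{\sqrt{p-1}} = 2\cot\phi, \qquad \tan(\tfrac{\pi}{2} - \phi) = \cot\phi.
\]
The hypothesis $|1 - \tfrac{2}{p}| < \cos\theta$ together with $\phi,\theta \in [0,\tfrac{\pi}{2})$ gives $\theta < \phi$, so $1 - \cot\phi\,\tan\theta > 0$ and $\tfrac{\pi}{2} - \phi + \theta \in (0,\tfrac{\pi}{2})$. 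In particular the angle addition formula
\[
\tan(\tfrac{\pi}{2} - \phi + \theta)\,(1 - \cot\phi\,\tan\theta) = \cot\phi + \tan\theta
\]
holds with all factors strictly positive.

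Next, let $a = (R_s\,\xi,\xi) + (R_s\,\eta,\eta)$, which is nonnegative by Lemma~\ref{Rs positive 1}. Lemma~\ref{W Bs < Rs} gives $|(B_s\,\xi,\eta)| \leq \tfrac{1}{2}(\tan\theta)\,a$, whence, using $\tfrac{|p-2|}{\sqrt{p-1}} = 2\cot\phi$,
\[
a + \frac{p-2}{\sqrt{p-1}}\,(B_s\,\xi,\eta) \;\geq\; a - \cot\phi\,\tan\theta\cdot a \;=\; (1 - \cot\phi\,\tan\theta)\,a.
\]
Multiplying this through by the positive quantity $\tan(\tfrac{\pi}{2} - \phi + \theta)$ and invoking the identity from the first step immediately yields the inequality of the lemma.

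The only subtlety is bookkeeping the sign conditions so that the bound on $|(B_s\,\xi,\eta)|$ combines with the tangent addition formula in the correct direction; all positivity required is supplied by $\theta < \phi$ and $a \geq 0$, and no analytic input beyond Lemmas~\ref{Rs positive 1} and~\ref{W Bs < Rs} is needed.
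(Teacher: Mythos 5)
Your proof is correct and is essentially the paper's argument, merely reorganized: the paper establishes nonnegativity of $\cot\phi\,\tan\theta\,a + \tfrac{p-2}{\sqrt{p-1}}(B_s\xi,\eta)$ (using Lemma~\ref{W Bs < Rs} and $\cot\phi = \tfrac{|p-2|}{2\sqrt{p-1}}$), adds $\tan(\tfrac{\pi}{2}-\phi+\theta)$ times that quantity to the left-hand side, and simplifies via the tangent addition formula, while you prove the equivalent lower bound $a + \tfrac{p-2}{\sqrt{p-1}}(B_s\xi,\eta) \geq (1 - \cot\phi\tan\theta)a$ and then multiply by the positive factor $\tan(\tfrac{\pi}{2}-\phi+\theta)$ before invoking the same tangent identity. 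Both rely on exactly the same ingredients (Lemma~\ref{W Bs < Rs}, the identities $\cos\phi = |1-\tfrac{2}{p}|$, $\sin\phi = \tfrac{2\sqrt{p-1}}{p}$, and the tangent addition formula together with positivity from $\theta < \phi$), so there is no meaningful difference in route.
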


\begin{proof}
First note that 
\begin{eqnarray}
&& \tan(\frac{\pi}{2} - \phi) \, (\tan\theta) \, \big( (R_s \, \xi, \xi) + (R_s \, \eta, \eta) \big)
	+ \frac{p-2}{\sqrt{p-1}} \, (B_s \, \xi', \eta)
\nonumber
\\
&\geq& 
\tan(\frac{\pi}{2} - \phi) \, (\tan\theta) \, \big( (R_s \, \xi, \xi) + (R_s \, \eta, \eta) \big)
	- \frac{|p-2|}{\sqrt{p-1}} \, |(B_s \, \xi, \eta)|
\nonumber
\\
&=& \tan(\frac{\pi}{2} - \phi) \, \Big( (\tan\theta) \, \big( (R_s \, \xi, \xi) + (R_s \, \eta, \eta) \big)
	- 2 \, |(B_s \, \xi, \eta)| \Big) \geq 0
\label{positive term}
\end{eqnarray}
as $\tan (\frac{\pi}{2} - \phi) = \cot(\phi) = \frac{|p-2|}{2 \, \sqrt{p-1}}$ and we used Lemma \ref{W Bs < Rs} in the last step.
We also deduce from the hypotheses that $\tan (\frac{\pi}{2} - \phi + \theta) \geq 0$.
Therefore
\begin{eqnarray*}
&& \big( \tan (\frac{\pi}{2} - \phi) + \tan\theta \big) \, 
	\big( (R_s \, \xi, \xi) + (R_s \, \eta, \eta) \big)
\\
&\leq& \big( \tan(\frac{\pi}{2} - \phi) + \tan\theta \big) \, \big( (R_s \, \xi, \xi) + (R_s \, \eta, \eta) \big)
\\
&& {} + \tan (\frac{\pi}{2} - \phi + \theta) \, 
	\Big( \tan(\frac{\pi}{2} - \phi) \, (\tan\theta) \, \big( (R_s \, \xi, \xi) + (R_s \, \eta, \eta) \big)
	+ \frac{p-2}{\sqrt{p-1}} \, (B_s \, \xi, \eta) \Big)
\\
&=& \tan (\frac{\pi}{2} - \phi + \theta) \, 
	\big( (R_s \, \xi, \xi) + (R_s \, \eta, \eta) + \frac{p-2}{\sqrt{p-1}} \, (B_s \, \xi, \eta) \big),
\end{eqnarray*}
where we used \eqref{positive term} in the first step.
\end{proof}

Next we prove Theorem \ref{main theorem}.

\begin{proof}[{\bf Proof of Theorem \ref{main theorem}}]
Let $u \in D(A_p)$.
Write $u \, \nabla \overline{u} = \xi + i \, \eta$, where $\xi, \eta \in \Ri^d$.
By Proposition \ref{Re and Im}, it suffices to show that 
\begin{eqnarray}
&& \big| (p-1) \, (B_s \, \xi, \xi) + (B_s \, \eta, \eta) 
	- (p-2) \, (R_s \, \xi, \eta) - p \, (R_a \, \xi, \eta) \big|
\nonumber
\\
&\leq& 
K \, \big( (p-1) \, (R_s \, \xi, \xi) + (R_s \, \eta, \eta) + (p-2) \, (B_s \, \xi, \eta) \big),
\label{main ineq 1}
\end{eqnarray}
where $K$ is defined by \eqref{K value}.
Set $\xi' = \sqrt{p-1} \, \xi$.
Then $\eqref{main ineq 1}$ is equivalent to
\begin{eqnarray}
&& \big| (B_s \, \xi', \xi') + (B_s \, \eta, \eta) 
	- \frac{p-2}{\sqrt{p-1}} \, (R_s \, \xi', \eta) - \frac{p}{\sqrt{p-1}} \, (R_a \, \xi', \eta) \big|
\nonumber
\\
&\leq& 
K \, \Big( (R_s \, \xi', \xi') + (R_s \, \eta, \eta) + \frac{p-2}{\sqrt{p-1}} \, (B_s \, \xi', \eta) \Big).
\label{main ineq 1 equiv form}
\end{eqnarray}
Note that by Lemma \ref{Rs} we have
\begin{equation} \label{Rs estimate}
\frac{|p-2|}{\sqrt{p-1}} \, \big| (R_s \, \xi', \eta) \big|
\leq \tan (\frac{\pi}{2} - \phi) \, \big( (R_s \, \xi', \xi') + (R_s \, \eta, \eta) \big)
\end{equation}
as $\tan (\frac{\pi}{2} - \phi) = \cot(\phi) = \frac{|p-2|}{2 \, \sqrt{p-1}}$.

Now we consider two cases.
\\
{\bf Case 1:} Suppose $R_a = 0$. 
Using Lemma \ref{W Bs < Rs} again we obtain
\begin{equation} \label{Bs estimate}
\big| (B_s \, \xi', \xi') + (B_s \, \eta, \eta) \big| 
\leq (\tan\theta) \, \big( (R_s \, \xi', \xi') + (R_s \, \eta, \eta) \big).
\end{equation}
It follows that 
\begin{eqnarray*} 
&& \big| (B_s \, \xi', \xi') + (B_s \, \eta, \eta) 
	- \frac{p-2}{\sqrt{p-1}} \, (R_s \, \xi', \eta)  - \frac{p}{\sqrt{p-1}} \, (R_a \xi', \eta) \big|
\\
&=& \big| (B_s \, \xi', \xi') + (B_s \, \eta, \eta) - \frac{p-2}{\sqrt{p-1}} \, (R_s \, \xi', \eta) \big|
\\
&\leq& \big( \tan (\frac{\pi}{2} - \phi) + \tan\theta \big) \, 
	\big( (R_s \, \xi', \xi') + (R_s \, \eta, \eta) \big)
\\
&\leq& \tan (\frac{\pi}{2} - \phi + \theta) \, 
	\big( (R_s \, \xi', \xi') + (R_s \, \eta, \eta) + \frac{p-2}{\sqrt{p-1}} \, (B_s \, \xi', \eta) \big),
\end{eqnarray*}
where we used $R_a = 0$ in the first step, \eqref{Bs estimate} and \eqref{Rs estimate} in the second step and Lemma \ref{ps} in the last step.

Hence \eqref{main ineq 1 equiv form} is valid and the result follows in this case.
\\
{\bf Case 2:} Suppose $R_a \neq 0$.
We rewrite the left hand side of \eqref{main ineq 1 equiv form} as
\begin{eqnarray*}
L &:=& \Big| \Big( (B_s \, \xi', \xi') + (B_s \, \eta, \eta) - 2 \, (R_a \, \xi', \eta) \Big)
	- \frac{p-2}{\sqrt{p-1}} \, (R_s \, \xi', \eta) 
	\\
	&& {} - \big( \frac{p}{\sqrt{p-1}} - 2 \big) \, (R_a \, \xi', \eta) \Big|.
\end{eqnarray*}
(Note that $\frac{p}{\sqrt{p-1}} \geq 2$ for all $p \in (1, \infty)$.)
Since $B_a = 0$, it follows from Lemma \ref{sector equiv form} that
\begin{equation} \label{Bs estimate another}
\big| (B_s \, \xi', \xi') + (B_s \, \eta, \eta) - 2 \, (R_a \, \xi', \eta) \big|
\leq (\tan\theta) \, \big( (R_s \, \xi', \xi') + (R_s \, \eta, \eta) \big).
\end{equation}
Next we deduce from Lemma \ref{Ra < Rs} that
\begin{equation} \label{Ra estimate}
\big( \frac{p}{\sqrt{p-1}} - 2 \big) \, \big| (R_a \, \xi', \eta) \big|
\leq \big( \frac{2}{\sin\phi} - 2 \big) \, (\tan\theta) \, \big( (R_s \, \xi', \xi') + (R_s \, \eta, \eta) \big)
\end{equation}
as $\sin\phi = \frac{2 \, \sqrt{p-1}}{p}$.
Now it follows from \eqref{Rs estimate}, \eqref{Bs estimate another} and \eqref{Ra estimate} that
\begin{eqnarray*}
L
&\leq& \Big( \big( \frac{2}{\sin\phi} - 1 \big) \, \tan\theta + \tan(\frac{\pi}{2} - \phi) \Big) \,
		\big( (R_s \, \xi', \xi') + (R_s \, \eta, \eta) \big)
\\
&=& \frac{\big( \frac{2}{\sin\phi} - 1 \big) \, \tan\theta + \tan(\frac{\pi}{2} - \phi)}{\tan\theta + \tan (\frac{\pi}{2} - \phi)} \, 
	\big( \tan\theta + \tan (\frac{\pi}{2} - \phi) \big) \, 
	\big( (R_s \, \xi', \xi') + (R_s \, \eta, \eta) \big)
\\
&\leq& \frac{\big( \frac{2}{\sin\phi} - 1 \big) \, \tan\theta + \tan(\frac{\pi}{2} - \phi)}{\tan\theta + \tan (\frac{\pi}{2} - \phi)} \, 
	\tan (\frac{\pi}{2} - \phi + \theta) \, 
	\Big( (R_s \, \xi', \xi') + (R_s \, \eta, \eta) 
	\\
	&& {} + \frac{p-2}{\sqrt{p-1}} \, (B_s \, \xi', \eta) \Big)
\\
&=& \frac{\big( \frac{2}{\sin\phi} - 1 \big) \, \tan\theta + \tan(\frac{\pi}{2} - \phi)}{1 - (\tan \theta) \, \tan (\frac{\pi}{2}-\phi)} \, 
	\Big( (R_s \, \xi', \xi') + (R_s \, \eta, \eta) 
	+ \frac{p-2}{\sqrt{p-1}} \, (B_s \, \xi', \eta) \Big),
\end{eqnarray*}
where we used Lemma \ref{ps} in the second step.

Hence \eqref{main ineq 1 equiv form} is also valid in this case.
\end{proof}

\section{Generation of contraction holomorphic semigroup} \label{Rd domain}

Let $\Omega = \Ri^d$ and $\theta \in [0, \frac{\pi}{2})$.
We assume $c_{kl} \in W^{2,\infty}(\Ri^d, \Ci)$ for all $k, l \in \{1, \ldots, d\}$.
Assume further that $(C(x) \, \xi, \xi) \in \Sigma_\theta$ for all $x \in \Ri^d$ and $\xi \in \Ci^d$, where $C = (c_{kl})_{1 \leq k,l \leq d}$ and $\Sigma_\theta$ is defined by \eqref{sector defi}.

Let $p \in (1, \infty)$.
We will prove in Proposition \ref{Ap closable} that $A_p$ is closable.
Let $\overline{A_p}$ be the closure of $A_p$.
We will show in this section that $-\overline{A_p}$ generates a holomorphic semigroup on $L_p(\Ri^d)$ which is contractive on a sector.
This is the content of Theorems \ref{holomorphic semigroup} and \ref{contraction holomorphic semigroup}.

First we introduce some more definitions.
Let $q$ be such that $\frac{1}{p} + \frac{1}{q} = 1$.
Define 
\begin{equation} \label{Hq}
H_q u = - \sum_{k,l=1}^d \D_k (\overline{c_{kl}} \, \D_l u)
\end{equation}
on the domain
\[
D(H_q) = C_c^\infty(\Ri^d).
\]
Define 
\[
B_p = (H_q)^*,
\]
which is the dual of $H_q$.
Then $B_p$ is closed by \cite[Theorem III.5.29]{Kat1}.
Also note that $W^{2,p}(\Ri^d) \subset D(B_p)$ and
\[
B_p u = - \sum_{k,l=1}^d \D_l (c_{kl} \, \D_k u)
\]
for all $u \in W^{2,p}(\Ri^d)$.

\begin{prop} \label{Ap closable}
The operator $A_p$ is closable.
\end{prop}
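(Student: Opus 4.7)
The plan is to exhibit $A_p$ as a restriction of the operator $B_p=(H_q)^*$ introduced just before the proposition, and then invoke the general fact that any operator which admits a closed extension is automatically closable.

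Since $\Omega=\Ri^d$, one has $W^{1,p}_0(\Ri^d)=W^{1,p}(\Ri^d)$, so
\[
D(A_p)=W^{2,p}(\Ri^d)\cap W^{1,p}_0(\Ri^d)=W^{2,p}(\Ri^d).
\]
The paragraph preceding the statement already asserts that $W^{2,p}(\Ri^d)\subset D(B_p)$ with $B_p u=-\sum_{k,l}\D_l(c_{kl}\D_k u)=A_p u$ on this subspace. This is the only computation needed, and it is a two-fold integration by parts: for $u\in W^{2,p}(\Ri^d)$ and $\varphi\in C_c^\infty(\Ri^d)$ the hypothesis $c_{kl}\in W^{2,\infty}(\Ri^d)$ lets us move both derivatives off of $\varphi$ and produce an $L_p$-function, yielding
\[
\int_{\Ri^d} u\,\overline{H_q\varphi}\,dx=\int_{\Ri^d}(A_p u)\,\overline{\varphi}\,dx,
\]
which is precisely the defining relation for $u\in D((H_q)^*)=D(B_p)$ together with the identification $B_p u=A_p u$. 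Hence $A_p\subset B_p$.

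Since $H_q$ is densely defined (its domain $C_c^\infty(\Ri^d)$ is dense in $L_q(\Ri^d)$), its adjoint $B_p$ is closed by \cite[Theorem III.5.29]{Kat1}. Combining this with $A_p\subset B_p$ gives that $A_p$ is closable, with $\overline{A_p}\subset B_p$. There is essentially no obstacle here; the only substantive point is that the upgraded regularity assumption $c_{kl}\in W^{2,\infty}(\Ri^d)$ (in place of the $W^{1,\infty}$ hypothesis used in earlier sections) is exactly what guarantees $W^{2,p}(\Ri^d)\subset D(B_p)$, so that the inclusion $A_p\subset B_p$ is meaningful.
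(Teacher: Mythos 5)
Your proof is correct and takes essentially the same approach as the paper: both show $A_p\subset B_p=(H_q)^*$ and use that an adjoint is closed, hence any restriction of it is closable. You merely spell out the two details the paper leaves implicit — that $D(A_p)=W^{2,p}(\Ri^d)$ on $\Omega=\Ri^d$, and the integration by parts establishing $W^{2,p}(\Ri^d)\subset D(B_p)$ with $B_p u=A_p u$ there.
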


\begin{proof}
Since $A_p \subset B_p$ and $B_p$ is closed, the operator $A_p$ is closable.
\end{proof}

It turns out that $B_p = \overline{A_p}$ under certain conditions, as shown in the following proposition.

\begin{prop} \label{Ap m accretive}
Suppose $|1 - \frac{2}{p}| \leq \cos\theta$ and $B_a = 0$.
Then $\overline{A_p} = B_p$.
Moreover, $\overline{A_p}$ is $m$-accretive.
\end{prop}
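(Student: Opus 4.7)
The plan is to prove accretivity of $A_p$ first, then upgrade to $m$-accretivity via a reflexive-Banach-space criterion, with the equality $\overline{A_p}=B_p$ following from maximality. Throughout, I would use the fact that $A_p$ and its formal adjoint $H_q$ are symmetric under the exchange $p\leftrightarrow q$, since $|1-\tfrac{2}{p}|=|1-\tfrac{2}{q}|$ and the coefficient matrix $\overline C^T$ of $H_q$ again takes values in $\Sigma_\theta$ (the sector is invariant under conjugation, and the quadratic form is invariant under transposition) with zero anti-symmetric imaginary part (the imaginary-part matrix $-B^T$ has the same anti-symmetric part as $-B$, which vanishes since $B_a=0$).

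To show $A_p$ is accretive, I would begin with the formula in Proposition \ref{Re and Im}. With $B_a=0$, the real part of $(A_pu,|u|^{p-2}u\one_{[u\neq 0]})$ reduces to the integral of $(p-1)(R_s\xi,\xi)+(R_s\eta,\eta)+(p-2)(B_s\xi,\eta)$. Substituting $\xi'=\sqrt{p-1}\,\xi$ and applying Lemma \ref{W Bs < Rs}, the cross term is bounded by $\cot\phi\cdot\tan\theta\cdot\bigl((R_s\xi',\xi')+(R_s\eta,\eta)\bigr)$, using $\cot\phi=\tfrac{|p-2|}{2\sqrt{p-1}}$ with $\phi=\arccos|1-\tfrac{2}{p}|$. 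The hypothesis $|1-\tfrac{2}{p}|\leq\cos\theta$ translates exactly to $\phi\geq\theta$, hence $\cot\phi\cdot\tan\theta\leq\cot\theta\cdot\tan\theta=1$, so the integrand is nonnegative and $A_p$ is accretive.

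For $m$-accretivity, I would invoke the Phillips criterion for reflexive Banach spaces: a densely defined, closable operator $T$ is essentially $m$-accretive iff both $T$ and its Banach adjoint $T^*$ are accretive. Since $D(A_p)\supset C_c^\infty(\Ri^d)$ is dense in $L_p$, it remains to verify accretivity of $(A_p)^*$. Integration by parts gives $H_q\subset (A_p)^*$, and by the symmetry just noted, the same computation as in the previous paragraph shows $H_q$ is accretive on $C_c^\infty(\Ri^d)$. Since $C_c^\infty$ is a core for $(A_p)^*$ (equivalently, for $B_q$, established via a mollification/cutoff argument using the $W^{2,\infty}$ regularity of the coefficients), the accretivity transfers to $(A_p)^*$. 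Phillips' criterion then yields that $\overline{A_p}$ is $m$-accretive, so $\lambda+\overline{A_p}:D(\overline{A_p})\to L_p$ is bijective for every $\lambda>0$.

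For the equality $\overline{A_p}=B_p$, given $v\in D(B_p)$, choose $v'\in D(\overline{A_p})$ with $(\lambda+\overline{A_p})v'=(\lambda+B_p)v$; since $\overline{A_p}\subset B_p$, this gives $(\lambda+B_p)(v-v')=0$. The dual $B_p=(H_q)^*$ is accretive (by the same core/duality argument as for $(A_p)^*$, applied to the $q\leftrightarrow p$ switched roles), hence $\lambda+B_p$ is injective and $v=v'\in D(\overline{A_p})$. Therefore $\overline{A_p}=B_p$. The main obstacle is the core step: showing that $C_c^\infty(\Ri^d)$ is dense in $D((A_p)^*)$ (and in $D(B_p)$) in the graph norm, which is what allows the accretivity on test functions to extend to the full domain. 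This is where the hypothesis $\Omega=\Ri^d$ with $c_{kl}\in W^{2,\infty}$ is essential, since one can then mollify freely and control the commutator between mollification and the second-order operator by the additional derivative of the coefficients.
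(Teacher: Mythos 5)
Your approach is genuinely different from the paper's. The paper's proof is a one-line citation to \cite[Proposition 4.9]{Do1}, which (following \cite{WongDzung}, as the paper indicates in Section \ref{Rd domain}) establishes $m$-accretivity of $B_p$ via the Wong--Dzung method of two a priori estimates: the accretivity estimate $\R(B_p u,\,|u|^{p-2}u\one_{[u\neq 0]})\geq 0$ together with a gradient estimate $\R(\nabla(B_p u),\,|\nabla u|^{p-2}\nabla u\one_{[\nabla u\neq 0]})\geq -M\|\nabla u\|_p^p$, from which surjectivity of $\lambda+B_p$ is deduced. You instead propose the Phillips duality criterion in a reflexive Banach space: accretivity of $A_p$ plus accretivity of $(A_p)^*$, each proved by the same elementary computation using Proposition \ref{Re and Im} and Lemma \ref{W Bs < Rs}, with $\cot\phi\,\tan\theta\leq 1$ under $\phi\geq\theta$. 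Your accretivity computation itself is correct, and the observation that $H_q$ has coefficient matrix $C^*=\overline{C}^T$ which again satisfies the sector condition with vanishing imaginary anti-symmetric part is a nice symmetry to exploit. The duality route avoids the gradient estimate (Proposition \ref{W 2nd ineq}) entirely, at the cost of needing to control the full Banach adjoint.

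However, the proposal has one genuine gap and one imprecision. The gap, which you yourself flag, is the core property: that $C_c^\infty(\Ri^d)$ is dense in $D((A_p)^*)$ in graph norm. This is precisely the nontrivial content of the cited \cite[Proposition 4.9]{Do1} (or rather of the density result feeding into it, cf.\ Proposition \ref{W Cc dense 1} here); it is not an auxiliary detail one can wave away with ``mollify freely,'' since commuting a mollifier with a degenerate second-order operator having only $W^{2,\infty}$ coefficients is itself delicate and is where the real work lies. The imprecision is the parenthetical ``$C_c^\infty$ is a core for $(A_p)^*$ (equivalently, for $B_q$).'' These are not a priori equivalent: one has $\overline{H_q}\subset (A_p)^*$ (by $A_p\subset B_p=(H_q)^*$, taking adjoints) and separately $\overline{H_q}\subset B_q=(H_p)^*$ (by integration by parts), so the two core statements assert $(A_p)^*=\overline{H_q}$ and $B_q=\overline{H_q}$ respectively; both reduce to identifying $\overline{H_q}$, but neither implies the other without further argument. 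Once the core statement for $(A_p)^*$ is granted, the rest of your proof -- Phillips for $m$-accretivity of $\overline{A_p}$, then injectivity of $\lambda+B_p$ to upgrade $\overline{A_p}\subset B_p$ to equality -- is sound. So the route is valid and arguably more transparent than citing \cite{Do1}, but it leaves open exactly the density result that the paper outsources.
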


\begin{proof}
By \cite[Proposition 4.9]{Do1} the operator $B_p$ is $m$-accretive and the space $C_c^\infty(\Ri^d)$ of test functions is a core for $B_p$.
It follows that $\overline{A_p} = B_p$ and $A_p$ is $m$-accretive as claimed.
\end{proof}

Using Theorem \ref{main theorem} we are now able to prove the generation result in Theorem \ref{holomorphic semigroup}.

\begin{proof}[{\bf Proof of Theorem \ref{holomorphic semigroup}}]
It follows from Theorem \ref{main theorem} that 
\[
|\I (\overline{A_p} u, |u|^{p-2} \, u \, \one_{[u \neq 0]})| 
\leq K \, \R (\overline{A_p} u, |u|^{p-2} \, u \, \one_{[u \neq 0]})
\]
for all $u \in D(\overline{A_p})$, where $K$ is defined by \eqref{K value}.
Therefore the interior $\Sigma_{\pi - \arctan(K)}^\circ \subset \rho(-\overline{A_p})$ by \cite[Theorem 1.3.9]{Paz} and Proposition \ref{Ap m accretive}, where $\rho(-\overline{A_p})$ denotes the resolvent set of $-\overline{A_p}$.
Moreover,
\begin{equation} \label{resovent bound}
\|(\lambda + \overline{A_p})^{-1}\|_{p \to p} \leq \frac{1}{\dist(\lambda,S(-\overline{A_p}))}
\end{equation}
for all $\lambda \in \Sigma_{\pi - \arctan(K)}^\circ$, where $S(-\overline{A_p})$ is the numerical range of $-\overline{A_p}$ defined by
\[
S(-\overline{A_p}) 
= \big\{ - \big( \overline{A_p} u, |u|^{p-2} \, u \, \one_{[u \neq 0]} \big): 
		u \in D(\overline{A_p}) \mbox{ and } \|u\|_p = 1 \big\}.
\]

Let $\varepsilon \in (0, \pi - \arctan(K))$. 
Then $\dist(\lambda,S(-\overline{A_p})) \geq (\sin\varepsilon) \, |\lambda|$ for all $\lambda \in \Sigma_{\pi - \arctan(K) - \varepsilon}$.
Therefore \eqref{resovent bound} implies 
\[
\|(\lambda + \overline{A_p})^{-1}\|_{p \to p} \leq \frac{1}{(\sin\varepsilon) \, |\lambda|}
\]
for all $\lambda \in \Sigma_{\pi - \arctan(K) - \varepsilon}$.
Hence we deduce from \cite[Theorem 2.5.2(c)]{Paz} that $-\overline{A_p}$ generates a holomorphic semigroup on $L_p(\Ri^d)$ with angle $\psi = \frac{\pi}{2} - \arctan(K)$.
\end{proof}

Our next aim is to show Theorem \ref{contraction holomorphic semigroup}.
We will do this by first showing that $-B_p$ generates a holomorphic semigroup which is contractive on a sector.
This together with Proposition \ref{Ap m accretive} imply the theorem.
We first obtain some preliminary results.

In what follows we let $B_{p,\alpha} = e^{i \alpha} B_p$ for all $\alpha \in (-\frac{\pi}{2} + \theta, \frac{\pi}{2}  - \theta)$ and adopt the notation used in Lemmas \ref{W Oleinik 2} and \ref{C alpha identities}.
We aim to show that $B_{p,\alpha}$ is an $m$-accretive operator for all $\alpha$ in a suitable range.
Following \cite{WongDzung} and \cite{Do1} we need two crucial inequalities for $B_{p,\alpha}$ in order to do this.
The first inequality is given by the next proposition.
The second inequality is derived in Proposition \ref{W 2nd ineq}.

\begin{prop} \label{W 1st ineq}
Suppose $B_a = 0$.
Let $p \in (1, \infty)$ be such that $|1 - \frac{2}{p}| < \cos \theta$.
Let $\alpha \in (-\psi, \psi)$, where $\psi$ is given by \eqref{psi}.
Then
\[
\R (B_{p,\alpha} u, |u|^{p-2} \, u \, \one_{[u \neq 0]}) \geq 0
\]
for all $u \in W^{2,p}(\Ri^d)$.
\end{prop}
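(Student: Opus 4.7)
The plan is to reduce the proposition to Theorem \ref{main theorem} via a simple rotation argument. Since $\Omega = \Ri^d$ we have $W^{1,p}_0(\Ri^d) = W^{1,p}(\Ri^d)$, so $D(A_p) = W^{2,p}(\Ri^d)$, and on this domain $B_p$ and $A_p$ are given by the same divergence-form expression. Therefore, for every $u \in W^{2,p}(\Ri^d)$,
\[
(B_{p,\alpha} u, |u|^{p-2} \, u \, \one_{[u \neq 0]}) = e^{i\alpha} \, (A_p u, |u|^{p-2} \, u \, \one_{[u \neq 0]}).
\]
Write $(A_p u, |u|^{p-2} \, u \, \one_{[u \neq 0]}) = \mathcal{R} + i \, \mathcal{I}$ with $\mathcal{R}, \mathcal{I} \in \Ri$. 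By Theorem \ref{main theorem}, whose hypotheses ($|1 - \tfrac{2}{p}| < \cos\theta$ and $B_a = 0$) are in force, one has $|\mathcal{I}| \leq K \, \mathcal{R}$ with $K$ given by \eqref{K value}. Since $K \geq 0$ this automatically forces $\mathcal{R} \geq 0$.

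Taking the real part and using $|\mathcal{I}| \leq K \, \mathcal{R}$,
\[
\R(B_{p,\alpha} u, |u|^{p-2} \, u \, \one_{[u \neq 0]}) = \mathcal{R} \cos\alpha - \mathcal{I} \sin\alpha \geq \mathcal{R} \, (\cos\alpha - K \, |\sin\alpha|).
\]
To finish it suffices to check that $|\alpha| < \psi$ implies $\cos\alpha > K \, |\sin\alpha|$, which amounts to $\tan\psi = 1/K$, i.e.\ $\psi = \tfrac{\pi}{2} - \arctan K$. This is the content of a short bookkeeping check against \eqref{psi} in the two cases: when $R_a = 0$, $K = \tan(\tfrac{\pi}{2} - \phi + \theta)$ gives $\arctan K = \tfrac{\pi}{2} - \phi + \theta$, whence $\tfrac{\pi}{2} - \arctan K = \phi - \theta = \psi$; and when $R_a \neq 0$, this identity is immediate from the definition of $\psi$ in \eqref{psi}.

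There is really no main obstacle: the work was done in Theorem \ref{main theorem} and everything else is just reading off the real part of $e^{i\alpha}(\mathcal{R} + i\mathcal{I})$ and matching the constants in \eqref{K value} and \eqref{psi}. The only mild point of care is the identification $\tan\psi = 1/K$ in both branches of \eqref{K value}, and the observation that the sectorial estimate of Theorem \ref{main theorem} in fact yields $\mathcal{R} \geq 0$ (so that the inequality $\mathcal{R}\cos\alpha - K\mathcal{R}|\sin\alpha| \geq 0$ is permissible).
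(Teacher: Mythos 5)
Your proof is correct, and it takes a genuinely different and noticeably shorter route than the paper's. The paper re-derives the real part from scratch for the rotated coefficient matrix $C_\alpha$: it goes back to the integration-by-parts identity of Theorem \ref{BG integration by parts on Bp}, expands everything via Lemma \ref{C alpha identities}, and then re-estimates the resulting pointwise integrand $P$ using the lemmas of Section~2, splitting into the two cases $R_a=0$ and $R_a\neq 0$. You instead observe that $(B_{p,\alpha}u,\cdot)=e^{i\alpha}(A_pu,\cdot)$ on $W^{2,p}(\Ri^d)=D(A_p)$, so the rotated real part is just $\mathcal R\cos\alpha-\mathcal I\sin\alpha$, and Theorem \ref{main theorem} plus the identity $\psi=\tfrac{\pi}{2}-\arctan K$ finishes it. This bypasses Lemma \ref{C alpha identities} and all the re-estimation; the resulting pointwise inequalities the paper proves are precisely the integrand-level version of the same bound $\cot|\alpha|\geq K$ that you obtain at the level of the integral, so nothing extra is lost for the application in Proposition \ref{W Cc dense 2}.

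One small point of care: the step ``$|\mathcal I|\leq K\mathcal R$ with $K\geq 0$ automatically forces $\mathcal R\geq 0$'' is only automatic when $K>0$. The borderline $K=0$ does occur under the hypotheses (namely $p=2$, $\theta=0$, $R_a=0$), and there the inequality $|\mathcal I|\leq K\mathcal R$ only yields $\mathcal I=0$ and says nothing about the sign of $\mathcal R$. This is easily patched: either note that $\mathcal R=\R(A_pu,|u|^{p-2}u\one_{[u\neq 0]})\geq 0$ is exactly the accretivity of $B_p$, which the paper already has from Proposition \ref{Ap m accretive} (via \cite[Proposition 4.9]{Do1}), or read it off directly from Proposition \ref{Re and Im} and Lemmas \ref{Rs}, \ref{W Bs < Rs}: with $B_a=0$ the real-part integrand is bounded below by $(1-\cot\phi\,\tan\theta)\big((R_s\xi',\xi')+(R_s\eta,\eta)\big)\geq 0$, since $\phi>\theta$. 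With that remark added, the argument is complete.
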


\begin{proof}
Let $u \in W^{2,p}(\Ri^d)$.
It follows from Theorem \ref{BG integration by parts on Bp} that
\begin{eqnarray} 
(B_{p,\alpha} u, |u|^{p-2} \, u \, \one_{[u \neq 0]})
& = & \int_{[u \neq 0]} |u|^{p-2} \, (C_\alpha \, \nabla \overline{u}, \nabla \overline{u})
\nonumber
\\
& &
{} + (p - 2) \, \int_{[u \neq 0]} |u|^{p-4} \, 
	\big( C_\alpha \, \R(u \, \nabla \overline{u}), \R(u \, \nabla \overline{u}) \big)
\nonumber
\\
& &
	{} - i \, (p - 2) \, \int_{[u \neq 0]} |u|^{p-4} \, 
	\big( C_\alpha \, \R(u \, \nabla \overline{u}), \I(u \, \nabla \overline{u}) \big).
\label{W MS expansion}
\end{eqnarray}
Write $u \, \nabla \overline{u} = \xi + i \, \eta$, where $\xi, \eta \in \Ri^d$.
Then
\begin{eqnarray*}
|u|^2 \, (C_\alpha \, \nabla \overline{u}, \nabla \overline{u})
& = & (C_\alpha \, u \, \nabla \overline{u}, u \, \nabla \overline{u})
	= \big( C_\alpha (\xi + i \, \eta), \xi + i \, \eta \big)
\\
& = & (R_\alpha \, \xi, \xi) + (R_\alpha \, \eta, \eta) + (B_\alpha \, \xi, \eta) - (B_\alpha \, \eta, \xi)
	\\
	& {} & {} + i \, \big( (R_\alpha \, \eta, \xi) - (R_\alpha \, \xi, \eta) + (B_\alpha \, \xi, \xi) + (B_\alpha \, \eta, \eta) \big).
\end{eqnarray*}
Therefore
\begin{eqnarray*}
\R \big( |u|^{2} \, (C_\alpha \, \nabla \overline{u}, \nabla \overline{u}) \big)
& = & (R_\alpha \, \xi, \xi) + (R_\alpha \, \eta, \eta) + (B_\alpha \, \xi, \eta) - (B_\alpha \, \eta, \xi)
\\
& = & (R_{s,\alpha} \, \xi, \xi) + (R_{s,\alpha} \, \eta, \eta) + 2 \, (B_{a,\alpha} \, \xi, \eta).
\end{eqnarray*}
We also have
\[
\R \big( C_\alpha \, \R(u \, \nabla \overline{u}), \R(u \, \nabla \overline{u}) \big)
= \R (C_\alpha \, \xi, \xi)
= (R_\alpha \, \xi, \xi)
= (R_{s,\alpha} \, \xi , \xi).
\]
Similarly
\[
\R \big( i \, \big( C_\alpha \, \R(u \, \nabla \overline{u}), \I(u \, \nabla \overline{u}) \big) \big)
= \R \big( i \, (C_\alpha \, \xi, \eta) \big)
= -(B_\alpha \, \xi, \eta)
= -(B_{s,\alpha} \, \xi , \eta) - (B_{a,\alpha} \, \xi , \eta).
\]
Hence taking the real parts on both sides of \eqref{W MS expansion} yields
\begin{eqnarray}
&& \R (B_{p,\alpha} u, |u|^{p-2} \, u \, \one_{[u \neq 0]})
\nonumber
\\
&=& \int_{[u \neq 0]} |u|^{p-4} \, \Big( (p-1) \, (R_{s,\alpha} \, \xi, \xi) + (R_{s,\alpha} \, \eta, \eta) 
	+ p \, (B_{a,\alpha} \, \xi, \eta)	
	+ (p-2) \, (B_{s,\alpha} \, \xi, \eta) \Big)
\nonumber
\\
& = & \int_{[u \neq 0]} |u|^{p-4} \, \Big( (R_{s,\alpha} \, \xi', \xi') + (R_{s,\alpha} \, \eta, \eta) 
	+ \frac{p}{\sqrt{p-1}} \, (B_{a,\alpha} \, \xi', \eta)
	+ \frac{p-2}{\sqrt{p-1}} \, (B_{s,\alpha} \, \xi', \eta) \Big),
\nonumber
\\
&& \label{int P}
\end{eqnarray}
where $\xi' = \sqrt{p-1} \, \xi$.
Set 
\begin{equation} \label{P}
P 
= (R_{s,\alpha} \, \xi', \xi') + (R_{s,\alpha} \, \eta, \eta) 
	+ \frac{p}{\sqrt{p-1}} \, (B_{a,\alpha} \, \xi', \eta)
	+ \frac{p-2}{\sqrt{p-1}} \, (B_{s,\alpha} \, \xi', \eta).
\end{equation}
We will show that $P \geq 0$.
We consider 2 cases.
\\
{\bf Case 1:} Suppose $R_a = 0$.
Note that $\cot\phi = \frac{|p-2|}{2 \, \sqrt{p-1}}$.
We have
\begin{equation} \label{case 1 estimate 1}
\big| (\sin\alpha) \, \big( (B_s \, \xi', \xi') + (B_s \, \eta, \eta) \big) \big|
\leq 
\sin(|\alpha|) \, (\tan\theta) \, \big( (R_s \, \xi', \xi') + (R_s \, \eta, \eta) \big)
\end{equation}
and 
\begin{equation} \label{case 1 estimate 2}
\Big| \frac{p-2}{\sqrt{p-1}} \, (\cos\alpha) \, (B_s \, \xi', \eta) \Big|
\leq 
(\cot\phi) \, (\cos\alpha) \, (\tan\theta) \, \big( (R_s \, \xi', \xi') + (R_s \, \eta, \eta) \big).
\end{equation}
by Lemma \ref{W Bs < Rs}.
Also
\begin{equation} \label{case 1 estimate 3}
\Big| \frac{p-2}{\sqrt{p-1}} \, (\sin\alpha) \, (R_s \, \xi', \eta) \Big|
\leq 
(\cot\phi) \, \sin(|\alpha|) \, \big( (R_s \, \xi', \xi') + (R_s \, \eta, \eta) \big)
\end{equation}
by Lemma \ref{Rs}.
Since $R_a = 0$, Lemma \ref{C alpha identities}\ref{B alpha} gives $B_{a,\alpha} = (\sin\alpha) \, R_a = 0$.
It follows from Lemma \ref{C alpha identities}, \eqref{P}, \eqref{case 1 estimate 1}, \eqref{case 1 estimate 2} and \eqref{case 1 estimate 3} that
\begin{eqnarray*}
P 
&=& (R_{s,\alpha} \, \xi', \xi') + (R_{s,\alpha} \, \eta, \eta) 
	+ \frac{p-2}{\sqrt{p-1}} \, (B_{s,\alpha} \, \xi', \eta)
\\
&=& (\cos\alpha) \, \big( (R_s \, \xi', \xi') + (R_s \, \eta, \eta) \big)
	- (\sin\alpha) \, \big( (B_s \, \xi', \xi') + (B_s \, \eta, \eta) \big)
	\\
	&& {} 	+ \frac{p-2}{\sqrt{p-1}} \, (\sin\alpha) \, (R_s \, \xi', \eta) 
			+ \frac{p-2}{\sqrt{p-1}} \, (\cos\alpha) \, (B_s \, \xi', \eta)
\\
&\geq& \Big( \cos\alpha - \sin(|\alpha|) \, \tan\theta 
		- (\cot\phi) \, \sin(|\alpha|) - (\cot\phi) \, (\cos\alpha) \, \tan\theta
		\Big) \, \big( (R_s \, \xi', \xi') + (R_s \, \eta, \eta) \big)
\\
&\geq& 0,
\end{eqnarray*}
where we used the fact that $\alpha \in (-\psi,\psi)$ in the last step.
Hence we deduce from \eqref{int P} that $\R (B_{p,\alpha} u, |u|^{p-2} \, u \, \one_{[u \neq 0]}) \geq 0$ in this case.
\\
{\bf Case 2:} Suppose $R_a \neq 0$.
Expanding \eqref{P} using Lemma \ref{C alpha identities} gives
\begin{eqnarray}
P 
&=& (R_{s,\alpha} \, \xi', \xi') + (R_{s,\alpha} \, \eta, \eta) 
	+ \frac{p}{\sqrt{p-1}} \, (B_{a,\alpha} \, \xi', \eta)
	+ \frac{p-2}{\sqrt{p-1}} \, (B_{s,\alpha} \, \xi', \eta)
\nonumber
\\ 
&=& (\cos\alpha) \, \big( (R_s \, \xi', \xi') + (R_s \, \eta, \eta) \big)
	- (\sin\alpha) \, \big( (B_s \, \xi', \xi') + (B_s \, \eta, \eta) \big)
	\nonumber
	\\
	&& {} + \frac{p}{\sqrt{p-1}} \, (\sin\alpha) \, (R_a \, \xi', \eta)
			+ \frac{p-2}{\sqrt{p-1}} \, (\sin\alpha) \, (R_s \, \xi', \eta) 
			+ \frac{p-2}{\sqrt{p-1}} \, (\cos\alpha) \, (B_s \, \xi', \eta)
\nonumber
\\
&=& (\cos\alpha) \, \big( (R_s \, \xi', \xi') + (R_s \, \eta, \eta) \big)
	- (\sin\alpha) \, \big( (B_s \, \xi', \xi') + (B_s \, \eta, \eta) - 2 \, (R_a \, \xi', \eta) \big)
	\nonumber
	\\
	&& {} + \big( \frac{p}{\sqrt{p-1}} - 2 \big) \, (\sin\alpha) \, (R_a \, \xi', \eta)
			+ \frac{p-2}{\sqrt{p-1}} \, (\sin\alpha) \, (R_s \, \xi', \eta) 
			\nonumber
	\\
	&& {} + \frac{p-2}{\sqrt{p-1}} \, (\cos\alpha) \, (B_s \, \xi', \eta),
\label{P terms}
\end{eqnarray}
where we used Lemma \ref{C alpha identities}\ref{R alpha} and \ref{B alpha} in the second step.
Next we estimate the terms in \eqref{P terms}.
By Lemma \ref{sector equiv form} we have
\begin{equation} \label{P 2nd term}
\Big|
(\sin\alpha) \, \big( (B_s \, \xi', \xi') + (B_s \, \eta, \eta) - 2 \, (R_a \, \xi', \eta) \big)
\Big|
\leq 
\sin(|\alpha|) \, (\tan\theta) \, \big( (R_s \, \xi', \xi') + (R_s \, \eta, \eta) \big)
\end{equation}
since $B_a = 0$ by hypothesis.
Using Lemma \ref{Ra < Rs} and the fact that $\sin\phi = \frac{2 \, \sqrt{p-1}}{p}$ we deduce that
\begin{equation} \label{P 3rd term}
\Big| \big( \frac{p}{\sqrt{p-1}} - 2 \big) \, (\sin\alpha) \, (R_a \, \xi', \eta) \Big|
\leq 
\big( \frac{2}{\sin\phi} - 2 \big) \, \sin(|\alpha|) \, (\tan\theta) \, 
	\big( (R_s \, \xi', \xi') + (R_s \, \eta, \eta) \big).
\end{equation}
Next note that $\cot\phi = \frac{|p-2|}{2 \, \sqrt{p-1}}$.
Therefore 
\begin{equation} \label{P 4th term}
\Big| \frac{p-2}{\sqrt{p-1}} \, (\sin\alpha) \, (R_s \, \xi', \eta) \Big|
\leq 
(\cot\phi) \, \sin(|\alpha|) \, \big( (R_s \, \xi', \xi') + (R_s \, \eta, \eta) \big)
\end{equation}
by Lemma \ref{Rs}.
It follows from Lemma \ref{W Bs < Rs} that
\begin{equation} \label{P 5th term}
\Big| \frac{p-2}{\sqrt{p-1}} \, (\cos\alpha) \, (B_s \, \xi', \eta) \Big|
\leq (\cot\phi) \, (\cos\alpha) \, (\tan\theta) \, 
	\big( (R_s \, \xi', \xi') + (R_s \, \eta, \eta) \big).
\end{equation}
Next \eqref{P terms}, \eqref{P 2nd term}, \eqref{P 3rd term}, \eqref{P 4th term} and \eqref{P 5th term} together imply
\begin{eqnarray}
P
&\geq& \left( \big( 1 - (\tan\theta) \, \cot\phi \big) \, \cos\alpha
	- \Big( (\frac{2}{\sin\phi} - 1) \, \tan\theta + \cot\phi \Big) \, \sin(|\alpha|) \right) \,
	\big( (R_s \, \xi', \xi') + (R_s \, \eta, \eta) \big)
\nonumber
\\
&\geq& 0,
\label{P positive}
\end{eqnarray}
where we used that fact that $\alpha \in (-\psi, \psi)$ and Lemma \ref{Rs} in the last step.
Combining \eqref{int P} and \eqref{P positive} yields $\R (B_{p,\alpha} u, |u|^{p-2} \, u \, \one_{[u \neq 0]}) \geq 0$ in this case.
\end{proof}

Next we prove the second inequality for $B_{p,\alpha}$.
We need the following density result.

\begin{prop} \label{W Cc dense 1}
Let $\alpha \in (-\psi, \psi)$, where $\psi$ is given by \eqref{psi}.
Then the space $C_c^\infty(\Ri^d)$ is dense in $(D(B_{p,\alpha}) \cap W^{1,p}(\Ri^d), \|\cdot\|_{D(B_{p,\alpha})})$.
\end{prop}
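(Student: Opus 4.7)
The plan is to deduce this density immediately from the core property of $C_c^\infty(\Ri^d)$ for $B_p$ already established in Proposition \ref{Ap m accretive}. The constraint $\alpha \in (-\psi,\psi)$ tacitly requires $\psi>0$, which by inspection of \eqref{psi} forces $|1-\tfrac{2}{p}|<\cos\theta$ together with $B_a=0$. These are precisely the hypotheses of Proposition \ref{Ap m accretive}, whose proof invokes \cite[Proposition 4.9]{Do1} to obtain that $C_c^\infty(\Ri^d)$ is a core for $B_p$. Since $B_{p,\alpha}=e^{i\alpha}B_p$ has the same domain as $B_p$ and $\|B_{p,\alpha}u\|_p=\|B_p u\|_p$ for all $u \in D(B_p)$, the graph norm of $B_{p,\alpha}$ coincides with that of $B_p$, so $C_c^\infty(\Ri^d)$ is also a core for $B_{p,\alpha}$.

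Given any $u \in D(B_{p,\alpha})\cap W^{1,p}(\Ri^d)\subset D(B_{p,\alpha})$, the core property supplies a sequence $(\phi_n)\subset C_c^\infty(\Ri^d)$ with $\phi_n \to u$ in $L_p(\Ri^d)$ and $B_{p,\alpha}\phi_n \to B_{p,\alpha}u$ in $L_p(\Ri^d)$, i.e.\ $\phi_n \to u$ with respect to $\|\cdot\|_{D(B_{p,\alpha})}$. Each $\phi_n$ trivially lies in $C_c^\infty(\Ri^d)\subset D(B_{p,\alpha})\cap W^{1,p}(\Ri^d)$, so the required density follows. No genuine obstacle arises in the present statement: all the real work is packaged into the cited core result, whose own proof relies on a cutoff-plus-Friedrichs-mollification scheme with a commutator estimate of the form $\D_l([c_{kl},\rho_\varepsilon\ast]\D_k u)\to 0$ in $L_p$, invoking the $W^{2,\infty}$ regularity of the coefficients. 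Should the intended norm $\|\cdot\|_{D(B_{p,\alpha})}$ be interpreted as also incorporating $\|\nabla u\|_p$, one would reproduce this same two-step argument directly, reducing first to compactly supported $u$ via a dilation cutoff $\chi(\cdot/n)$ and then mollifying, the Friedrichs commutator being the step that consumes the extra regularity of the $c_{kl}$.
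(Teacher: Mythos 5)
Your argument for the graph-norm reading is valid: $B_{p,\alpha}=e^{i\alpha}B_p$ has the same domain and, since $|e^{i\alpha}|=1$, the same graph norm as $B_p$, so a core for one is a core for the other, and density in all of $D(B_{p,\alpha})$ with respect to the graph norm restricts to density in $D(B_{p,\alpha})\cap W^{1,p}(\Ri^d)$ because $C_c^\infty(\Ri^d)$ sits inside that intersection. This is essentially the same move as in the paper, which simply cites \cite[Proposition 4.7]{Do1}. You instead route through Proposition~\ref{Ap m accretive}, hence through \cite[Proposition 4.9]{Do1}. Since the proof of Proposition~\ref{W Cc dense 2} records that \cite[Proposition 4.9]{Do1} itself rests on \cite[Propositions 4.1, 4.7 and 4.8]{Do1}, your route is circuitous --- you derive the analogue of the auxiliary lemma from the theorem that uses it --- though this creates no circularity inside the present paper, as both are established in \cite{Do1}.

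Two cautions. First, the assertion that $\psi>0$ ``forces \dots $B_a=0$'' is not correct: the formula \eqref{psi} involves $\phi$, $\theta$ and the dichotomy $R_a=0$ versus $R_a\neq 0$, but contains no $B_a$, so inspection of it can force nothing about $B_a$. Rather, $B_a=0$ is a hypothesis under which $\psi$ was introduced in Theorem~\ref{holomorphic semigroup} and must be carried along explicitly when you invoke Proposition~\ref{Ap m accretive}. Second, and more substantively: your primary argument renders the restriction $\alpha\in(-\psi,\psi)$ vacuous, since the graph norm of $B_{p,\alpha}$ is manifestly $\alpha$-independent. That the paper bothers to state a range of $\alpha$, together with the way Proposition~\ref{W Cc dense 1} is actually deployed in the proof of Proposition~\ref{W 2nd ineq} (to reduce an estimate involving $\|\nabla u\|_p^p$ and $\nabla(B_{p,\alpha}u)$ to compactly supported $u$), strongly suggests the reading you defer to at the end: a norm that also controls gradients, for which convergence in plain graph norm is insufficient. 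You correctly identify that in that case the proof is the cutoff-plus-Friedrichs-mollification scheme with a commutator estimate using the $W^{2,\infty}$ regularity of the coefficients, and that is where the genuine content lies; the graph-norm shortcut should be the fallback remark, not the main argument.
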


\begin{proof}
The claim follows from \cite[Proposition 4.7]{Do1} (see also \cite[Proposition 4.23]{Do}).
\end{proof}

%Let $\tau \in C_c^\infty(\Ri^d)$ be such that $0 \leq \tau \leq \one$, $\tau|_{B_1(0)} = 1$ and $\supp \tau \subset B_2(0)$.
%Define $\tau_n(x) = \tau(n^{-1} \, x)$ for all $x \in \Ri^d$ and $n \in \Ni$.
%
%
%\begin{lemm} \label{W cut-off}
%Let $u \in D(B_{p,\alpha}) \cap W^{1,p}(\Ri^d)$.
%Then $\tau_n \, u \in D(B_{p,\alpha})$ for all $n \in \Ni$ and we have $\lim_{n \to \infty} \tau_n \, u = u$ in $D(B_{p,\alpha})$.
%If $u$ satisfies further that $u \in W^{2,p}(\Ri^d)$ and $\nabla (B_{p,\alpha} u) \in (L_p(\Ri^d))^d$, then $\nabla (B_{p,\alpha} (\tau_n \, u)) \in (L_p(\Ri^d))^d$ and $\lim_{n \to \infty} \nabla (B_{p,\alpha} (\tau_n \, u)) = \nabla (B_{p,\alpha} u)$ in $(L_p(\Ri^d))^d$.
%\end{lemm}
%
%\begin{proof}
%This is \cite[Lemma 4.6]{Do1} (see also \cite[Lemma 4.22]{Do}).
%\end{proof}

The second inequality is as follows (see \cite[proposition 6.1]{WongDzung} for the case when $\alpha = 0$ and $B_p$ has real symmetric coefficients as well as \cite[Proposition 4.8]{Do1} for the case when $\alpha = 0$).

\begin{prop} \label{W 2nd ineq}
Suppose $B_a = 0$.
Let $p \in (1, \infty)$ be such that $|1 - \frac{2}{p}| < \cos \theta$.
Let $\alpha \in (-\gamma, \gamma)$, where $\gamma$ is given by \eqref{gamma}.
Then there exists an $M > 0$ such that 
\[
\R (\nabla (B_{p,\alpha} u), 
	|\nabla u|^{p-2} \, \nabla u \, \one_{[\nabla u \neq 0]}) 
\geq - M \, \|\nabla u\|_p^p
\]
for all $u \in W^{2,p}(\Ri^d)$ such that $\nabla(B_{p,\alpha} u) \in (L_p(\Ri^d))^d$.
\end{prop}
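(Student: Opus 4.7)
The plan is to mimic the proof of Proposition \ref{W 1st ineq}, but now applied to the vector field $v = \nabla u$ in place of the scalar $u$, with an Oleinik-type estimate (Lemma \ref{W Oleinik 2}) used to control the commutator terms that arise when a derivative is placed on $c_{kl,\alpha}$.

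The first step is to derive the distributional identity
\[
\D_j(B_{p,\alpha} u) = B_{p,\alpha}(\D_j u) - \sum_{k,l=1}^d \D_l\bigl((\D_j c_{kl,\alpha})\, \D_k u\bigr),
\]
which is valid since $c_{kl} \in W^{2,\infty}(\Ri^d)$. With $v = \nabla u$, this decomposes
\[
\bigl(\nabla(B_{p,\alpha} u),\, |v|^{p-2}\, v\, \one_{[v \neq 0]}\bigr)
= \sum_{j=1}^d \bigl(B_{p,\alpha} v_j,\, |v|^{p-2}\, \overline{v_j}\, \one_{[v \neq 0]}\bigr) + \mathcal{E},
\]
where $\mathcal{E}$ collects the commutator contributions carrying a factor $\D_j c_{kl,\alpha}$.

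For the principal sum, I would use Proposition \ref{W Cc dense 1} to reduce to smooth functions, and then apply the integration-by-parts expansion of Theorem \ref{BG integration by parts on Bp} componentwise. Setting $\D_k v_j = \xi_{jk} + i\,\eta_{jk}$ and summing in $j$, the real part assumes the form $\int_{[v \neq 0]} |v|^{p-4}\, \widetilde{P}$ where $\widetilde{P}$ is a matrix-valued analogue of the quadratic form $P$ in \eqref{P}. Redoing the case analysis of Proposition \ref{W 1st ineq} (via Lemmas \ref{Rs}, \ref{W Bs < Rs}, \ref{sector equiv form}, \ref{Ra < Rs} and \ref{C alpha identities}) shows that for $\alpha \in (-\gamma,\gamma)$ one has $\widetilde{P} \geq c \sum_j \tr\bigl((\nabla v_j)^*\, R_{s,\alpha}\, \nabla v_j\bigr)$ for some $c > 0$; the extra slack $(\tan\theta)(\tan|\alpha|) < \tfrac{1}{3}$ built into \eqref{gamma} in the case $R_a \neq 0$ is exactly the room needed for the strict positivity, as opposed to the mere nonnegativity of Proposition \ref{W 1st ineq}.

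To handle $\mathcal{E}$, I would integrate by parts once more to move a derivative onto $|v|^{p-2}\overline{v_j}$, producing integrals of the shape $\int (\D_j c_{kl,\alpha})\, v_k\, \D_l(|v|^{p-2}\overline{v_j})$. After a Cauchy--Schwarz step and a symmetrisation of the matrix built out of $v \otimes v$ (so that $U^T = U$ as required), Lemma \ref{W Oleinik 2} controls the coefficient factors by $\tr(U\, R_{s,\alpha}\, \overline{U})$ multiplied by $\|\D^2 C\|_\infty$; a Young inequality then yields, for any $\varepsilon > 0$,
\[
|\mathcal{E}|
\leq \varepsilon \int_{[v \neq 0]} |v|^{p-4}\, \sum_{j} \tr\bigl((\nabla v_j)^*\, R_{s,\alpha}\, \nabla v_j\bigr) + C_\varepsilon\, \|v\|_p^p.
\]
Choosing $\varepsilon$ smaller than $c$ lets the first term be absorbed into the positive contribution of $\widetilde{P}$, and $M := C_\varepsilon$ provides the required lower bound.

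The main obstacle is the last step: verifying that after symmetrisation the matrices arising in $\mathcal{E}$ satisfy the hypothesis $U^T = U$ of Lemma \ref{W Oleinik 2}, and checking quantitatively that the slack captured by the $\tfrac{1}{3}$ in \eqref{gamma} is sufficient to absorb the $\varepsilon$-term when $R_a \neq 0$. If absorption were to fail at the endpoint, one would need to tighten the range of $\alpha$ further, but the definition of $\gamma$ has been calibrated precisely so that this does not happen.
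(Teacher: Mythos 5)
Your overall strategy is the right one and largely matches the paper: decompose via the commutator identity $\D_j(B_{p,\alpha}u) = B_{p,\alpha}(\D_j u) - \sum_{k,l}\D_l\bigl((\D_j c_{kl,\alpha})\,\D_k u\bigr)$, control the error with Lemma~\ref{W Oleinik 2} and a Young inequality, and exploit the extra room in $\gamma$ to absorb the small term. However, there is a genuine gap in how you propose to treat the principal term: Theorem~\ref{BG integration by parts on Bp} cannot be applied ``componentwise'' to $v=\nabla u$, because for each $j$ the theorem produces the weight $|v_j|^{p-2}$, whereas the pairing you need has the shared weight $|\nabla u|^{p-2}$. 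So the expression $\sum_j(B_{p,\alpha}v_j,\,|\nabla u|^{p-2}\,\overline{v_j})$ does not split into $d$ instances of Theorem~\ref{BG integration by parts on Bp}. The paper instead performs the integration by parts by hand and arrives at trace expressions involving the full Hessian $U=(\D_l\D_k u)$, namely $\int\tr(\overline{U}\,R_{s,\alpha}\,U)\,|\nabla u|^{p-2}$ and $\int\tr(\overline{U}\,B_{a,\alpha}\,U)\,|\nabla u|^{p-2}$, plus a lower-order block built from $\xi+i\eta=U\nabla\overline{u}$. These traces must then be compared with the quadratic form $(R_s\,U\nabla\overline{u},\,U\nabla\overline{u})$, which is exactly where Lemmas~\ref{W R<trR} and~\ref{positive definite} enter; your plan does not account for these, and the case analysis of Proposition~\ref{W 1st ineq} does not carry over verbatim without them.

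Two smaller points. First, the matrix you feed to Lemma~\ref{W Oleinik 2} is the Hessian $U=(\D_l\D_k u)$, which is automatically symmetric; your worry about ``symmetrising a matrix built out of $v\otimes v$'' is misplaced and should be dropped. Second, you omit the term $(\mathrm{Ia})$ coming from $\D_l\D_j c_{kl,\alpha}$, which the paper bounds crudely by $\|c_{kl}\|_{W^{2,\infty}}$; it contributes an additive constant $M_1$ and is harmless, but it must be accounted for. The calibration remark about the $\tfrac{1}{3}$ slack in $\gamma$ is correct in spirit: the paper perturbs $p-1\to p-1-\varepsilon'$ and $1\to 1-\varepsilon'$ in the rescaling $\xi'=\sqrt{p-1-\varepsilon'}\,\xi$, $\eta'=\sqrt{1-\varepsilon'}\,\eta$, and \eqref{tan alpha 1st ineq}--\eqref{tan alpha 2nd ineq} are set up precisely so the argument of Proposition~\ref{W 1st ineq} survives the perturbation.
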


\begin{proof}
We consider two cases.
\\
{\bf Case 1:} Suppose $R_a = 0$.
Then it follows from Lemma \ref{C alpha identities} that $B_{a,\alpha} = R_a \, \sin\alpha = 0$. 
Moreover, the condition $\alpha \in (-\psi, \psi)$ implies $\tan(\theta+|\alpha|) < \tan\phi$.
Therefore \cite[Proposition 4.8]{Do1} still applies to yield the result.
\\
{\bf Case 2:} Suppose $R_a \neq 0$. 
If $\alpha = 0$, the claim follows from \cite[Proposition 4.8]{Do1}.
Therefore we may assume that $\alpha \neq 0$ for the rest of the proof.
Note that $\alpha \in (-\gamma, \gamma)$ implies $(\tan\theta) \, \tan(|\alpha|) < \frac{1}{3}$ and $K \, \tan(|\alpha|) < 1$, where $K$ is defined by \eqref{K value}.
Let $\varepsilon_0 \in (0, 1 \wedge (p-1))$ be such that 
\begin{equation} \label{tan alpha 1st ineq}
(\tan\theta) \, \tan(|\alpha|) \leq \frac{1 - \varepsilon}{3 - \varepsilon}
\end{equation}
and
\begin{eqnarray} 
&& \left( \Big( \frac{p}{\sqrt{(1-\varepsilon)(p-1-\varepsilon)}} - 1 \Big) \, \tan\theta 
+ \frac{|p-2|}{2 \, \sqrt{(1-\varepsilon)(p-1-\varepsilon)}} \right) \, \tan(|\alpha|) 
\nonumber
\\
&\leq&
1 - (\tan\theta) \, \frac{|p-2|}{2 \, \sqrt{(1-\varepsilon)(p-1-\varepsilon)}}
\label{tan alpha 2nd ineq}
\end{eqnarray}
for all $\varepsilon \in (0, \varepsilon_0)$.
Let $\varepsilon \in (0, \varepsilon_0)$ be such that 
\begin{equation} \label{W correct varepsilon}
\varepsilon 
< \frac{\varepsilon_0}{32 \, d \, \big( 1 + \tan(\theta + |\alpha|) \big)^2 \, 
	\sup_{1 \leq l \leq d} \|\D_l^2 C\|_\infty}.
\end{equation}

Let $u \in W^{2,p}(\Ri^d)$.
By Lemma \ref{W Cc dense 1} we can assume without loss of generality that $u$ has a compact support.
For the rest of the proof, all integrations are over the set $\{x \in \Ri^d: |(\nabla u)(x)| \neq 0 \}$.
We have
\begin{eqnarray*}
(\nabla (B_{p,\alpha} u), |\nabla u|^{p-2} \, \nabla u)
& = & - \sum_{k,l,j=1}^d \int \Big( \D_j \D_l (e^{i \alpha} \, c_{kl} \, \D_k u) \Big) \, 
	|\nabla u|^{p-2} \, \D_j \overline{u}
\\
& = & - \sum_{k,l,j=1}^d \int e^{i \alpha} \, \Big( \D_l \big( (\D_j c_{kl}) \, (\D_k u) 
	+ c_{kl} \, (\D_j \D_k u) \big) \Big) \,
	|\nabla u|^{p-2} \, \D_j \overline{u}
\\
& = & - \sum_{k,l,j=1}^d \int e^{i \alpha} \, \Big( \D_l \big( (\D_j c_{kl}) \, (\D_k u) \big) \Big) \, 
	|\nabla u|^{p-2} \, \D_j \overline{u}
\\*
& & {} + \sum_{k,l,j=1}^d \int e^{i \alpha} \, c_{kl} \, (\D_j \D_k u) \, 
	\D_l \big( |\nabla u|^{p-2} \, \D_j \overline{u} \big)
\\
& = & ({\rm I}) + ({\rm II}).
\end{eqnarray*}
We first consider the real part of (I).
We have 
\begin{eqnarray*}
&& -\R \sum_{k,l,j=1}^d \int e^{i \alpha} \, \Big( \D_l \big( (\D_j c_{kl}) \, (\D_k u) \big) \Big) \, 
	|\nabla u|^{p-2} \, \D_j \overline{u}
\\
&=& - \R \sum_{k,l,j=1}^d \int e^{i \alpha} \, (\D_l \D_j c_{kl}) \, (\D_k u) \, 
	(\D_j \overline{u}) \, |\nabla u|^{p-2}
\\
&& {} - \R \sum_{k,l,j=1}^d \int e^{i \alpha} \, (\D_j c_{kl}) \, (\D_l \D_k u) \, 
	(\D_j \overline{u}) \, |\nabla u|^{p-2}
\\
& = & ({\rm Ia}) + ({\rm Ib}).
\end{eqnarray*}
For (Ia) we have
\[
({\rm Ia}) 
\geq - \frac{1}{2} \sum_{k,l,j=1}^d \|c_{kl}\|_{W^{2,\infty}} 
	\int (|\D_k u|^2 + |\D_j u|^2) \, |\nabla u|^{p-2}
\geq - M_1 \, \|\nabla u\|_p^p,
\]
where $M_1 = d^2 \, \sup \{\|c_{kl}\|_{W^{2,\infty}}: 1 \leq k,l \leq d\}$.
Let $U = (\D_l \D_k u)_{1 \leq k,l \leq d}$.
For (Ib) we estimate
\begin{eqnarray*}
({\rm Ib})  
& = & - \R \sum_{j=1}^d \int \tr((\D_j C_\alpha) \, U) \, (\D_j \overline{u}) \, |\nabla u|^{p-2}
\\
& \geq & - \sum_{j=1}^d \int \Big( \varepsilon \, |\tr((\D_j C_\alpha) \, U)|^2 \, |\nabla u|^{p-2}
	+ \frac{1}{4 \varepsilon} \, |\D_j \overline{u}|^2 \, |\nabla u|^{p-2} \Big)
\\
& \geq & - \varepsilon' \int \tr(U \, R_{s,\alpha} \, \overline{U}) \, |\nabla u|^{p-2}
	- M_2 \, \|\nabla u\|_p^p
\\
& = & - \varepsilon' \int \tr(\overline{U} \, R_{s,\alpha} \, U) \, |\nabla u|^{p-2}
	- M_2 \, \|\nabla u\|_p^p,
\end{eqnarray*}
where we used Lemma \ref{W Oleinik 2} in the third step with 
\[
\varepsilon' 
= 32 \, \varepsilon \, d \, \big( 1 + \tan(\theta + |\alpha|) \big)^2 \, 
	\sup_{1 \leq l \leq d} \|\D_l^2 C\|_\infty
\]
and $M_2 = \frac{1}{4 \varepsilon}$.
Note that $\varepsilon' \in (0, \varepsilon_0)$ by \eqref{W correct varepsilon}.

Next we consider the real part of (II).
Note that 
\begin{eqnarray*}
\R \sum_{k,l,j=1}^d \int e^{i \alpha} \, c_{kl} \, (\D_j \D_k u) \, 
	\D_l \big( |\nabla u|^{p-2} \, \D_j \overline{u} \big)
& = & \R \sum_{k,l,j=1}^d \int e^{i \alpha} \, c_{kl} \, (\D_j \D_k u) \, (\D_l \D_j \overline{u}) \, |\nabla u|^{p-2}
\\
& & {} + \R \sum_{k,l,j=1}^d \int e^{i \alpha} \, c_{kl} \, (\D_j \D_k u) \, 
	(\D_j \overline{u}) \, \D_l(|\nabla u|^{p-2})
\\
& = & ({\rm IIa}) + ({\rm IIb}).
\end{eqnarray*}
In what follows we let $U \, \nabla \overline{u} = \xi + i \, \eta$, where $\xi, \eta \in \Ri^d$.
For (IIa) we have 
\[
({\rm IIa}) 
= \int \tr(\overline{U} \, \R (C_\alpha) \, U) \, |\nabla u|^{p-2}
= \int \tr(\overline{U} \, R_{s,\alpha} \, U) \, |\nabla u|^{p-2} 
	+ i \int \tr(\overline{U} \, B_{a,\alpha} \, U) \, |\nabla u|^{p-2}.
\]
For (IIb) we have
\begin{eqnarray*}
({\rm IIb}) 
& = & \R \sum_{k,l,i,j=1}^d \frac{p-2}{2} \int e^{i \alpha} \, c_{kl} \, (\D_j \D_k u) \, (\D_j \overline{u}) \, 
	\Big( (\D_l \D_i u) \, (\D_i \overline{u}) + (\D_l \D_i \overline{u}) \, (\D_i u) \Big) \, |\nabla u|^{p-4}
\\
& = & \frac{p-2}{2} \int \R \Big( \big( C_\alpha \, U \, \nabla \overline{u}, \overline{U \, \nabla \overline{u}} \big)
	+ \big( C_\alpha \, U \, \nabla \overline{u}, U \, \nabla \overline{u} \big) \Big) \, |\nabla u|^{p-4}
\\
& = & (p-2) \int \Big( (R_\alpha \, \xi, \xi) - (B_\alpha \, \eta, \xi) \Big) \, |\nabla u|^{p-4}
\\
& = & (p-2) \int \Big( (R_{s,\alpha} \, \xi, \xi) - (B_{s,\alpha} \, \xi, \eta) + (B_{a,\alpha} \, \xi, \eta) \Big) \, |\nabla u|^{p-4},
\end{eqnarray*}
where $\xi, \eta \in \Ri^d$ and $U \, \nabla \overline{u} = \xi + i \, \eta$.

In total we obtain
\begin{eqnarray}
\R (\nabla (B_{p,\alpha} u), |\nabla u|^{p-2} \, \nabla u)
& \geq & - (M_1 + M_2) \, \|\nabla u\|_p^p
	+ (1-\varepsilon') \int \tr(U \, R_{s,\alpha} \, \overline{U}) \, |\nabla u|^{p-2}
\nonumber
\\
& & {} + i \int \tr(U \, B_{a,\alpha} \, \overline{U}) \, |\nabla u|^{p-2}
\nonumber
\\
& & 
	{} + (p-2) \int \Big( (R_{s,\alpha} \, \xi, \xi) - (B_{s,\alpha} \, \xi, \eta) + (B_{a,\alpha} \, \xi, \eta) \Big) \, |\nabla u|^{p-4}
\nonumber
\\
& = & - (M_1 + M_2) \, \|\nabla u\|_p^p + P,
\label{2nd ineq intermediate}
\end{eqnarray}
where 
\begin{eqnarray*}
P 
&=& (1-\varepsilon') \int \tr(U \, R_{s,\alpha} \, \overline{U}) \, |\nabla u|^{p-2}
	+ i \int \tr(U \, B_{a,\alpha} \, \overline{U}) \, |\nabla u|^{p-2}
	\\
	&& {} + (p-2) \int \Big( (R_{s,\alpha} \, \xi, \xi) - (B_{s,\alpha} \, \xi, \eta) 
		+ (B_{a,\alpha} \, \xi, \eta) \Big) \, |\nabla u|^{p-4}.
\end{eqnarray*}
Next we will show that $P \geq 0$.
First note that $(1-\varepsilon') \, (\cos\alpha) - (3 - \varepsilon') \, \sin(|\alpha|) \, \tan\theta \geq 0$ due to \eqref{tan alpha 1st ineq}.
It follows that
\begin{eqnarray*}
&& (1-\varepsilon') \tr(\overline{U} \, R_{s,\alpha} \, U) \, |\nabla u|^2
	+ i \, \tr(\overline{U} \, B_{a,\alpha} \, U) \, |\nabla u|^2
\\
&=& (1-\varepsilon') \, (\cos\alpha) \, \tr(\overline{U} \, R_s \, U) \, |\nabla u|^2
	- (1-\varepsilon') \, (\sin\alpha) \, \tr(\overline{U} \, B_s \, U) \, |\nabla u|^2
	\\
	&& {} + i \, (\sin\alpha) \, \tr(\overline{U} \, R_a \, U) \, |\nabla u|^2
\\
&=& \Big( (1-\varepsilon') \, \cos\alpha - (3 - \varepsilon') \, \sin(|\alpha|) \, \tan\theta \Big) \, 
	\tr(\overline{U} \, R_s \, U) \, |\nabla u|^2
	\\
	&& {} + (1-\varepsilon') \, \sin(|\alpha|) \, \tr \left( \overline{U} \, \Big( (\tan\theta) \, R_s 
		- \frac{\sin\alpha}{\sin(|\alpha|)} B_s \Big) \, U \right) \, |\nabla u|^2
	\\
	&& {} + \sin(|\alpha|) \, \tr \left( \overline{U} \, \Big( 2 \, (\tan\theta) \, R_s 
		+ i \, \frac{\sin\alpha}{\sin(|\alpha|)} R_a \Big) \, U \right) \, |\nabla u|^2
\\
&\geq& \Big( (1-\varepsilon') \, (\cos\alpha) - (3 - \varepsilon') \, \sin(|\alpha|) \, \tan\theta \Big) \, 
	(R_s \, U \, \nabla \overline{u}, U \, \nabla \overline{u})
	\\
	&& {} + (1-\varepsilon') \, \sin(|\alpha|) \, \Big( \big( (\tan\theta) \, R_s 
		- \frac{\sin\alpha}{\sin(|\alpha|)} B_s \big) \, U \, \nabla \overline{u}, U \, 
		\nabla \overline{u} \Big)
	\\
	&& {} + \sin(|\alpha|) \, \Big( \big( 2 \, (\tan\theta) \, R_s 
		+ i \, \frac{\sin\alpha}{\sin(|\alpha|)} R_a \big) \, U \, \nabla \overline{u}, U \, 
		\nabla \overline{u} \Big)
\\
&=& (1-\varepsilon') \, (\cos\alpha) \, (R_s \, U \, \nabla \overline{u}, U \, \nabla \overline{u})
	- (1-\varepsilon') \, (\sin\alpha) \, (B_s \, U \, \nabla \overline{u}, U \, \nabla \overline{u})
	\\
	&& {} + i \, (\sin\alpha) \, (R_a \, U \, \nabla \overline{u}, U \, \nabla \overline{u})
\\
&=& (1-\varepsilon') \, (\cos\alpha) \, \Big( (R_s \, \xi, \xi) + (R_s \, \eta, \eta) \Big)
	- (1-\varepsilon') \, (\sin\alpha) \, \Big( (B_s \, \xi, \xi) + (B_s \, \eta, \eta) \Big)
	\\
	&& {} + 2 \, (\sin\alpha) \, (R_a \, \xi, \eta),
\end{eqnarray*}
where we used Lemmas \ref{W R<trR} and \ref{positive definite} in the third step.
Hence we obtain
\begin{eqnarray}
P
&\geq& \int \bigg( (1-\varepsilon') \, (\cos\alpha) \, \Big( (R_s \, \xi, \xi) + (R_s \, \eta, \eta) \Big)
	- (1-\varepsilon') \, (\sin\alpha) \, \Big( (B_s \, \xi, \xi) + (B_s \, \eta, \eta) \Big)
	\nonumber
	\\
	&& {} + 2 \, (\sin\alpha) \, (R_a \, \xi, \eta) \bigg) \, |\nabla u|^{p-4}
	\nonumber
	\\
	&& {} + (p-2) \int \Big( (R_{s,\alpha} \, \xi, \xi) - (B_{s,\alpha} \, \xi, \eta) 
		+ (B_{a,\alpha} \, \xi, \eta) \Big) \, |\nabla u|^{p-4}
\nonumber
\\
&=& \int \bigg( (\cos\alpha) \, \big( (p-1-\varepsilon') \, (R_s \, \xi, \xi) 
		+ (1-\varepsilon') \, (R_s \, \eta, \eta) \big)
	\nonumber
	\\
	&& {} - (\sin\alpha) \, \big( (p-1-\varepsilon') \, (B_s \, \xi, \xi) 
		+ (1-\varepsilon') \, (B_s \, \eta, \eta) \big)
	\nonumber
	\\
	&& {} + p \, (\sin\alpha) \, (R_a \, \xi, \eta) - (p-2) \, (\sin\alpha) \, (R_s \, \xi, \eta) 
		- (p-2) \, (\cos\alpha) \, (B_s \, \xi, \eta) \bigg) \, |\nabla u|^{p-4}
\nonumber
\\
&=& \int \bigg( (\cos\alpha) \, \big( (R_s \, \xi', \xi') + (R_s \, \eta', \eta') \big)
		 - (\sin\alpha) \, \big( (B_s \, \xi', \xi') + (B_s \, \eta', \eta') \big)
	\nonumber
	\\
	&& {} + \frac{p}{\sqrt{(1-\varepsilon') \, (p-1-\varepsilon')}} \, (\sin\alpha) \, (R_a \, \xi', \eta') 
		- \frac{p-2}{\sqrt{(1-\varepsilon') \, (p-1-\varepsilon')}} \, (\sin\alpha) \, (R_s \, \xi', \eta') 
	\nonumber
	\\
	&& {} - \frac{p-2}{\sqrt{(1-\varepsilon') \, (p-1-\varepsilon')}} \, (\cos\alpha) \, (B_s \, \xi', \eta') \bigg) \, |\nabla u|^{p-4},
\label{P positive intermediate}
\end{eqnarray}
where we used Lemma \ref{C alpha identities}\ref{R alpha} and \ref{B alpha} in the second step, $\xi' = \sqrt{p-1-\varepsilon'} \, \xi$ and $\eta' = \sqrt{1-\varepsilon'} \, \eta$.
Finally using \eqref{tan alpha 2nd ineq} we argue in a similar manner to that used in Case 2 of the proof of Proposition \ref{W 1st ineq} to derive $P \geq 0$.
Thus it follows from \eqref{2nd ineq intermediate} that
\[
\R (\nabla (B_{p,\alpha} u), |\nabla u|^{p-2} \, \nabla u) \geq - (M_1 + M_2) \, \|\nabla u\|_p^p
\]
as claimed.
\end{proof}

Next we use the two inequalities obtained in Propositions \ref{W 1st ineq} and \ref{W 2nd ineq} to show that $B_{p,\alpha}$ is $m$-accretive for all $\alpha$ in a suitable range.

\begin{prop} \label{W Cc dense 2}
Suppose $B_a = 0$.
Let $p \in (1, \infty)$ be such that $|1 - \frac{2}{p}| < \cos \theta$.
Let $\alpha \in (-\gamma, \gamma)$, where $\gamma$ is given by \eqref{gamma}.
Then $B_{p,\alpha}$ is $m$-accretive.
\end{prop}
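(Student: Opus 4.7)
The strategy is to establish both defining conditions of $m$-accretivity for $B_{p,\alpha}$: the norm bound $\|(\lambda+B_{p,\alpha})u\|_p \geq \lambda\|u\|_p$ for all $u \in D(B_{p,\alpha})$ and $\lambda > 0$, together with surjectivity of $\lambda_0 + B_{p,\alpha}$ onto $L_p(\Ri^d)$ for some $\lambda_0 > 0$. Note that $\alpha \in (-\gamma,\gamma) \subset (-\psi,\psi)$ by \eqref{gamma}, so both Propositions \ref{W 1st ineq} and \ref{W 2nd ineq} are available.

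For the norm bound, I would start from Proposition \ref{W 1st ineq} and apply H\"older's inequality. Since $\||u|^{p-2}u\,\one_{[u\neq 0]}\|_q = \|u\|_p^{p-1}$, for $u \in W^{2,p}(\Ri^d)$ we have
$$\lambda\|u\|_p^p \leq \R\bigl((\lambda+B_{p,\alpha})u,\,|u|^{p-2}u\,\one_{[u\neq 0]}\bigr) \leq \|(\lambda+B_{p,\alpha})u\|_p\,\|u\|_p^{p-1},$$
which yields the bound on $W^{2,p}(\Ri^d)$ and in particular on $C_c^\infty(\Ri^d)$. To extend the inequality to $u \in D(B_{p,\alpha})$, I would use Proposition \ref{W Cc dense 1} together with the closedness of $B_{p,\alpha}$: for $u \in D(B_{p,\alpha}) \cap W^{1,p}(\Ri^d)$ one approximates in the graph norm by $C_c^\infty$ functions and passes to the limit. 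A separate argument using Proposition \ref{W 2nd ineq} then shows that $D(B_{p,\alpha}) \subset W^{1,p}(\Ri^d)$, completing the extension.

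For surjectivity, my plan is to use the method of continuity. I would interpolate $B_{p,\alpha}$ with the rotated Laplacian via $B_{p,\alpha}^{(t)} := t B_{p,\alpha} + (1-t)\,e^{i\alpha}(-\Delta)$ for $t \in [0,1]$. The interpolated coefficient matrix $tC + (1-t)I$ still takes values in $\Sigma_\theta$, has vanishing antisymmetric imaginary part, and belongs to $W^{2,\infty}$, so Propositions \ref{W 1st ineq} and \ref{W 2nd ineq} apply uniformly in $t \in [0,1]$. This yields a uniform a priori bound $\|u\|_{W^{1,p}} \leq C\,\|(\lambda+B_{p,\alpha}^{(t)})u\|_p$ for sufficiently large $\lambda$. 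Since $\lambda + e^{i\alpha}(-\Delta)$ is a bijection on $L_p(\Ri^d)$ when $|\alpha|<\pi/2$ (by the Fourier multiplier calculus), the method of continuity then carries surjectivity from $t=0$ to $t=1$.

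The main obstacle is this range step: one must upgrade the $W^{1,p}$-gradient control from Proposition \ref{W 2nd ineq} into an a priori bound strong enough to close the continuity loop, which typically requires Calder\'on--Zygmund / Agmon--Douglis--Nirenberg type $W^{2,p}$ regularity for elliptic operators with Lipschitz coefficients. Once these technical ingredients (available in \cite{Do1}) are combined with the two key inequalities, the accretivity and surjectivity together show that $B_{p,\alpha}$ is $m$-accretive.
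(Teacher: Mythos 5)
The paper's own proof is a one-line reference to \cite[Proposition 4.9]{Do1}, with the remark that the three ingredients used there are replaced by Propositions~\ref{W 1st ineq}, \ref{W Cc dense 1} and~\ref{W 2nd ineq}. That argument, as the paper signals by writing ``Following \cite{WongDzung} and \cite{Do1}\ldots'', is the Wong--Dzung viscosity/elliptic-regularization scheme: one perturbs the degenerate operator by $\varepsilon(-\Delta)$, solves the strongly elliptic problem, extracts uniform $L_p$ and $W^{1,p}$ bounds from the two accretivity inequalities, and passes to a weak limit as $\varepsilon\to 0$. The crucial point is that this limit argument needs only $W^{1,p}$ control, which is exactly what Proposition~\ref{W 2nd ineq} supplies.

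Your accretivity step is essentially right: H\"older applied to Proposition~\ref{W 1st ineq}, then extension from $C_c^\infty$ to $D(B_{p,\alpha})$ via the density Proposition~\ref{W Cc dense 1} and closedness, together with the observation $\gamma\le\psi$. But the surjectivity step via method of continuity has a genuine gap, and it is exactly the one you flag yourself. The continuity method requires a uniform a priori bound in the norm of a \emph{fixed} space $X$ on which all the operators $B^{(t)}$ are bounded into $L_p$; the natural choice $X=W^{2,p}$ would demand
\[
\|u\|_{W^{2,p}} \leq C\,\|(\lambda+B_{p,\alpha}^{(t)})u\|_p
\]
uniformly up to $t=1$. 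That is precisely Calder\'on--Zygmund/ADN $W^{2,p}$ regularity, which \emph{fails} for degenerate elliptic operators --- the ellipticity constant of $tC+(1-t)I$ collapses as $t\to1$, and there is no lower bound on $\R C$ to rescue it. Conversely, if you take $X=W^{1,p}$ the operators are no longer bounded into $L_p$, so the abstract continuity theorem does not apply. Your closing sentence attributes the missing ingredient to \cite{Do1}, but what \cite{Do1} (following Wong--Dzung) actually supplies is not a $W^{2,p}$ estimate: it is the passage to the limit in a regularized family, where the $W^{1,p}$ bound from Proposition~\ref{W 2nd ineq} and weak compactness replace the regularity theory altogether. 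In short, the structure you propose (accretivity plus range condition) is correct, but the range condition must be proved by regularization and weak limits rather than by the method of continuity, because the latter forces you through a $W^{2,p}$ estimate that the degeneracy rules out.
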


\begin{proof}
The result follows from the arguments used in the proof of \cite[Proposition 4.9]{Do1}.
Note that \cite[Propositions 4.1, 4.7 and 4.8]{Do1} used in the proof of \cite[Proposition 4.9]{Do1} are now replaced by Propositions \ref{W 1st ineq}, \ref{W Cc dense 1} and \ref{W 2nd ineq} respectively.
\end{proof}

We now have enough preliminary results to prove Theorem \ref{contraction holomorphic semigroup}.

\begin{proof}[{\bf Proof of Theorem \ref{contraction holomorphic semigroup}}]
We consider two parts.
\\
(i) Contractivity: 
Using Proposition \ref{W Cc dense 2} and \cite[Theorem IX.1.23]{Kat1} we deduce that $-B_p$ generates a holomorphic semigroup with angle $\psi$ given by \eqref{psi} which is contractive on the sector $\Sigma_\gamma$, where $\gamma$ is given by \eqref{gamma}.
Note that $B_p = \overline{A_p}$ by Proposition \ref{Ap m accretive}.
Hence $S^{(p)}$ is contractive on $\Sigma_\gamma$.
\\
(ii) Consistency: 
It suffices to show that $S^{(p)}$ is consistent with $S$.
It follows from \cite[Propositions 1.1 and 5.1]{Do1} that the $C_0$-semigroup generated by $-B_2$ is consistent with the $C_0$-semigroup generated by $-B_p$.
Since $B_2 = \overline{A_2}$ and $B_p = \overline{A_p}$ by Proposition \ref{Ap m accretive}, the semigroup $S^{(p)}$ is consistent with $S$ as required.
\end{proof}

\subsection*{Acknowledgements}
I wish to thank Tom ter Elst for giving detailed and valuable comments.

%\bibliographystyle{Bibstyle}
%\bibliography{Bib}

\end{document}